\newcommand{\mz}{\mathbb{Z}}
\newcommand{\mr}{\mathbb{R}}
\newcommand{\mrz}{\mathbb{R}/\mathbb{Z}}
\newcommand{\dr}{\mathbb{D}^r}
\newcommand{\Glnr}{GL(n,\mathbb{R})}
\newcommand{\glnr}{\mathfrak{gl}(n,\mathbb{R})}
\newcommand{\hhat}{\hat{H}}
\newcommand{\fintsigma}{\underset{\Sigma}{\fint}}
\newcommand{\fintf}{\underset{F}{\fint}}
\newcommand{\dcfib}[1]{\hat{\pi}_{!#1}}
\newcommand{\DE}{\mathcal{D}(E)}
\newcommand{\SigmaiB}{\Sigma^i \times B}
\newcommand{\mcF}{\mathcal{F}}
\newcommand{\mcA}{\mathcal{A}}
\newtheorem{theorem}{Theorem}
\newtheorem{corollary}[theorem]{Corollary}
\newtheorem{lemma}[theorem]{Lemma}
\newtheorem{definition}[theorem]{Definition}
\newtheorem{remark}[theorem]{Remark}
\title{Invariants of families of flat connections using fiber integration of differential characters}
\author{Ishan Mata}
\date{}
\begin{document}
\maketitle
%\doublespacing

\begin{abstract}
Let $E\to B$ be a smooth  vector bundle of rank $n$, and let $P \in I^p(GL(n,\mathbb{R}))$ be a $GL(n,\mathbb{R})$-invariant polynomial of degree $p$ compatible with a universal integral characteristic class $ u \in H^{2p}(BGL(n,\mathbb{R}),\mathbb{Z})$. Cheeger-Simons theory associates a rigid invariant in $H^{2p-1}(B,\mathbb{R}/\mathbb{Z})$ to any flat connection on this bundle. Generalizing this result, Jaya Iyer (\textit{Letters in Mathematical Physics}, 2016, 106 (1) pp. 131-146) constructed maps  $H_r(\mathcal{D}(E)) \to H^{2p-r-1}(B,\mathbb{R}/\mathbb{Z})$ for $p>r+1$ where $\mathcal{D}(E)$ is the simplicial set of relatively flat connections, thereby associating invariants to families of flat connections. In this article we construct such maps for the cases $p<r$ and $p>r+1$ using fiber integration of differential characters. We find that for $p>r+1$ case, the invariants constructed here coincide with those obtained by Jaya Iyer, and that in the $p<r$ case the invariants are trivial. We further compare our construction with other results in the literature.
\end{abstract}

\section{Introduction}
Fix a smooth vector bundle $E \to B$ of rank $n$ on a finite dimensional smooth base manifold $B$. Let $\omega$ be a smooth connection on this bundle, and denote its curvature by $\Omega \in \Omega^2(B,End(E))$. Denote the set of degree $p$ $\Glnr$-invariant polynomials on $\glnr$ by $I^p(\Glnr)$. Chern-Weil theory (see \cite{tu}, for example) gives a homomorphism $I^p(\Glnr) \to H^{2p}(B,\mr)$, given by $P \mapsto [P(\Omega)]$ for any choice of connection on the bundle where $[\ ]$ denotes the equivalence class in the cohomology. By $I^p_\mz(\Glnr)$, we denote that subset of $I^p(\Glnr)$ whose image under the Chern-Weil map for the classifying bundle $E\Glnr \to B\Glnr$ lies in $H^{2p}(B\Glnr, \mz)$. Here by $H^{2p}(B\Glnr, \mz)$ we really mean its image in $H^{2p}(B\Glnr, \mr)$. Let $u \in H^{2p}(BGL(n,\mr),\mz)$ be a universal integral characteristic class compatible with $P$.
Cheeger and Simons \cite{cs2} (also see \cite{cs1}) defined objects \begin{equation}
                                               \hhat^k(B) := \{f: Z_{k-1}(B) \to \mr/\mz | f \circ \delta \in \Omega^k(B)\}.
                                              \end{equation}
Notice that our convention of the degree is different from theirs : they use $\hhat^{k-1}(B)$ to denote the R.H.S.. They showed that these objects fit into exact sequences 
\begin{equation}\label{csexact1}
 0 \to H^{k-1}(B,\mrz) \xrightarrow{i_1} \hhat^{k}(B) \xrightarrow{curv} \Omega^k_{cl}(B) \to 0
\end{equation}
\begin{equation}\label{csexact2}
 0 \to \frac{\Omega^{k-1}(B)}{\Omega^{k-1}_0(B)} \xrightarrow{\iota} \hhat^{k}(B) \xrightarrow{ch} H^{k}(B,\mz) \to 0.
\end{equation}
Here $\Omega_0^{k-1}(B)$ denotes closed $(k-1)$-differential forms with integral periods. They showed that to the data $(E \to B, \omega)$, one can associate a differential character $h \in \hhat^{2p}(B)$, such that $curv(h)=P(\Omega)$, and $ch(h)$ is the $u$-characteristic class of the bundle $E \to B$. As a corollary, they obtain the result that when the connection is flat the differential character lies in the image of the inclusion of $H^{2p-1}(B,\mrz)$ in $\hhat^{2p}(B)$. Thus the authors determined $\mrz$ cohomology classes which we denote by $cs_{(P,u)}(E,\theta) \in H^{2p-1}(B,\mrz) $. 
%They further show that if $p \geq 2$ the classes are rigid in a variation of the flat connection i.e. if $\gamma_s$ is a path of flat connections connecting $\gamma_0$, and $\gamma_1$, then $cs_{(P,u)}(E,\gamma_1)=cs_{(P,u)}(E,\gamma_0)$.
Generalizing this  work to families of connections, Jaya Iyer \cite{jni} showed how to associate an element of $H^{2p-r-1}(B,\mrz)$ to an element of $r$-th simplicial homology of the simplicial abelian group of relatively flat connections on the bundle. More precisely, she defines the simplicial abelian group $\DE$ whose set of $r-$simplices is the free abelian group generated by $(r+1)$-tuples $(D^0,\cdots,D^r)$ of relatively flat connections. Relative flatness means that $\underset{j}{\Sigma} t_j D^i$ is flat for any choice of $t_i$'s such that $\Sigma t_j =1$. She then constructs maps $\rho_{p,r}: H_r(\DE) \to H^{2p-r-1}(B,\mrz)$ for $p > r+1, r\geq 1$.\\
This article is a humble and modest extension of Jaya Iyer's work (henceforth the article \cite{jni} is often referred to as `Jaya Iyer's paper') using the technique of fiber integration of differential characters developed by B\"ar and Becker in \cite{bb}. Given an element $u$ compatible with $P$ as above, we derive maps $\tilde{\psi}_{P,u,r} : H_r(\DE) \to H^{2p-r-1}(B,\mrz)$  for $p \neq r,r+1$. If $\Sigma$ is an $r$-cycle in $\DE$, we consider the bundle $E \times \Sigma \to B \times \Sigma$ (we use the same symbol $\Sigma$ to denote the cycle and its geometric realization), and endow it with a certain connection. We apply the Cheeger-Simons theory to this data to obtain a differential character $h_{B \times \Sigma} \in \hhat^{2p}(B \times \Sigma)$, and apply integration along the fibers of the bundle $B \times \Sigma \to B$ to get a differential character in $\hhat^{2p-r}(B)$. We find that these maps do not actually depend on the choice of $u \in H^{2p}(BGL(n,\mr),\mz)$, and that the characteristic class of $\tilde{\psi}_{P,u,r}([\Sigma])$ considered as a differential character vanishes. Thus we conclude that $\tilde{\psi}_{P,u,r}([\Sigma])$  is in the image of the map $\frac{\Omega^{2p-r-1}}{\Omega^{2p-r-1}_0} \xrightarrow{\iota} \hhat^{2p-r}(B)$. Using again a theorem of B\"ar and Becker \cite{bb}, we compute a representative differential form. We find that for the $p > r+1$ case, this form matches with the one constructed in \cite{jni}. Our initial hope was to obtain new cohomology invariants in the $p<r$ case. However, it turns out that in this case the invariants are trivial i.e. $\tilde{\psi}(\Sigma)=0$.\\
Thereafter we proceed to discuss the relationship of our construction to other constructions in the literature. In \cite{bl} Biswas and Lopez consider the set of smooth maps $Maps(S,\mathcal{F})$ where $S$ is a smooth null-cobordant manifold of dimension $r$, and $\mathcal{F}$ is the space of flat connections on the principal G-bundle $E \to B$. Using the formalism of Atiyah bundle and the bundle of connections, they define certain forms $\beta^p_k \in \Omega^k(\mathcal{A},\Omega^{2r-k}(B))$. Here $\mathcal{A}$ denotes the space of all connections on $E \to B$ considered as an infinite dimensional Fr\'echet manifold. Using these forms they construct maps $\Lambda^p_{r+1} : Maps (S,\mathcal{F}) \to H^{2p-r-1}(B,\mr)$. They prove that the maps can be described as $[f: S \to \mathcal{F}] \mapsto [\underset{T}{\fint}P(\Omega)]$ where $\bar{f} : T \to \mathcal{A}$ is an extension of $f$ to a manifold $T$ whose boundary is $S$, and $\Omega$ is the curvature of a certain connection on $E \times T \to B \times T$. They show that these maps are well defined i.e. the answer does not depend on the choice of the extension $\bar{f}$ or the manifold $T$, that $\Lambda^p_{r+1}$ are indeed closed forms which define elements of the cohomology group $H^{2p-r-1}(B,\mr)$, and that $[\Lambda^p_{r+1}(f_0)]=[\Lambda^p_{r+1}(f_1)] \in H^{2p-r-1}(B,\mr)$ whenever $f_0,f_1$ are homologous. As argued in section \ref{comparison}, from the viewpoint of fiber integration these results proved in \cite{bl} can be obtained (modulo $\mz$) directly as consequences of some properties of fiber integration proved in \cite{bb}.\\
In \cite{lp}, the authors consider the principal G-bundle $E \times \mathcal{A} \to B \times \mathcal{A}$ with its canonical connection $\mathbb{A}$. If $\mathcal{G}$ is a subgroup of $Gau(E)$ which acts freely on $\mathcal{A}$, then $\mathcal{A} \to \mathcal{A}/\mathcal{G}$ becomes a principal bundle. They show that a choice of connection $\mathcal{U}$ on $\mathcal{A} \to \mathcal{A}/\mathcal{G}$ gives a connection $\underline{\mathcal{U}}$ on $(E \times \mathcal{A})/\mathcal{G} \to B \times \mathcal{A}/\mathcal{G}$, and  apply Cheeger-Simons theory to this bundle to obtain maps $\chi_{r} : H_r({\mathcal{F}/\mathcal{G}},\mz) \times H_{2p-r-1}(B,\mz) \to \mrz$ which do not depend on the choice of the connection $\mathcal{U}$. They also show that their approach using differential characters yields the same results as in \cite{bl} on cycles in $\mathcal{F}/\mathcal{G}$ that come from cycles in $\mathcal{F}$.\\
Our results can be considered a special case of their results for the case $\mathcal{G}=\{e\}$. (Even though we state and prove our results for vector bundles, the same discussion applies \textit{mutatis mutandis} to principal $G-$bundles.) However, since we deal with finite dimensional stratifolds, we do not need to assume that the Cheeger-Simons construction of a differential character given a smooth connection on a bundle holds good in case the base of the bundle is an infinite dimensional Fr\'echet manifold (see section \ref{comparison} for a discussion of this point). Also our method makes the relation between various approaches used in \cite{bl,lp,jni}, and maps obtained therein  more transparent. We find that all the three approaches yield the same invariant in $\mrz$ cohomology upto a possible sign factor (which depends on the orientation conventions).\\
The idea that fiber integration of differential characters can be used to obtain invariants of families of connections was explored in \cite{freed2} (also see \cite{ljungmann}). However as observed in \cite{jni}, the fiber integration developed in \cite{freed2} can not be used for the map $X \times \Delta^r \to X$ since it assumes that the fibers are compact manifolds without boundary. In our case, as we shall see, the fibers are not manifolds but compact stratifolds without boundary. This allows us to make use of the fiber integration construction of \cite{bb}. In fact our results are a direct application of the fiber integration construction (lemma 41), and proposition 54 of \cite{bb}.\\ 
The article is organized as follows. In section \ref{prelim} we very briefly describe the geometric chain model of differential characters, and fiber integration developed by B\"ar and Becker in \cite{bb}. In section \ref{construction} we give our construction of the invariants, and  thereafter in section \ref{comparison} we proceed to compare our construction with those in \cite{bl,lp,jni}. \\
The subject of invariants of flat connections is a subject of many articles, see for example \cite{guruprasad,ljungmann,dupont}.
\section{Preliminaries : Differential characters, geometric chains, and fiber integration}\label{prelim}
In this section we briefly state the definitions and results that are used later in the paper. Subsection \ref{geometricchains} describes a geometric chain model of differential characters on smooth spaces, and subsection \ref{fiberintegration} discusses the construction of fiber integration of differential characters and its properties.\\
Familiarity with smooth spaces and stratifolds is assumed, see section 2 of \cite{bb} for statement of essential definitions and results, and \cite{kreck} for an authoritative and comprehensive treatment. The discussion in this section is based on the framework developed in \cite{bb}. Nothing contained in this section is original.
\subsection{Geometric chain model of differential characters}\label{geometricchains}
\begin{definition}
 Let $\mathcal{C}_k(X)$ be the set of equivalence classes of smooth maps $\zeta : M \to X$ where $M$ is an oriented $k-$stratifold  such that $\partial M$ is an oriented $(k-1)-$stratifold under the following equivalence relation : \\ 
 $(\zeta : M \to X) \sim (\zeta': M' \to X) $ iff $\exists$ an orientation preserving diffeomorphism $\psi : M \to M'$ such that $\zeta' \circ \psi = \zeta$. The operation of disjoint union makes $\mathcal{C}_k(X)$ an abelian semi-group. We define the inverse of $[\zeta : M \to X]$ to be the same map but with the orientation of $M$ reversed : $[\bar{\zeta} : \bar{M} \to X]$. In this manner, $\mathcal{C}_k(X)$ becomes an abelian group. Elements of $\mathcal{C}_k(X)$ are called geometric chains.
\end{definition}
\begin{definition}
 The boundary operator $\partial : \mathcal{C}_k(X) \to \mathcal{C}_{k-1}(X)$is defined as $\partial [\zeta : M \to X] = [\zeta|_{\partial M} : \partial M \to X]$.
\end{definition}
\begin{definition}
 The set ker$\partial := \mathcal{Z}_k(X)$ is called the group of $k-$geometric cycles. The set im$\partial := \mathcal{B}_{k-1}(X)$ is called the group of geometric boundaries. The quotient $\mathcal{H}_k(X) := \frac{\mathcal{Z}_k(X)}{\mathcal{B}_k(X)}$ is the homology of the complex defined by $\partial$.
\end{definition}

Let $C_n(X;\mz)$ denote the group of smooth singular $n-$chains on $X$. A chain $c \in C_n(X;\mz)$ is called thin if $\forall \omega \in \Omega^n(X), \int_c \omega =0$. We denote the group of thin $n-$chains by $S_n(X, \mz)$. \\
We now define maps $\psi : \mathcal{C}_n(X) \to C_n(X,\mz)/S_n(X,\mz)$ by $ [\zeta : M \to X] \mapsto [\zeta_*(c)]_{S_n}$ where $c$ denotes a fundamental cycle of $H_n(M,\partial M, \mz)$ (or $H_n(M,\mz)$ if $\partial M = \phi$). Similarly we have maps from geometric cycles, and geometric boundaries to smooth singular cycles, and smooth singular boundaries respectively. We have the commutative diagram : 

\begin{tikzcd}
{} \arrow[r] & {\mathcal{C}_{n+1}(X)} \arrow[rr] \arrow[dd] &  & {\mathcal{B}_n(X)} \arrow[rr] \arrow[dd] &  & {\mathcal{Z}_n(X)} \arrow[rr] \arrow[dd] &  & {\mathcal{C}_n(X)} \arrow[dd] \arrow[r] & {} &  \\
 &  &  &  &  &  &  &  &  &  \\
{} \arrow[r] & {\frac{C_{n+1}(X,\mathbb{Z})}{S_{n+1}(X,\mathbb{Z})}} \arrow[rr] &  & {\frac{B_{n}(X,\mathbb{Z})}{\partial S_{n+1}(X,\mathbb{Z})}} \arrow[rr] &  & {\frac{Z_n(X,\mathbb{Z})}{\partial S_{n+1}(X,\mathbb{Z})}} \arrow[rr] &  & {\frac{C_n(X,\mathbb{Z})}{S_n(X,\mathbb{Z})}} \arrow[r] & {} & {}
\end{tikzcd}

This chain map induces a map on homology of the two chain complexes : $\mathcal{H}_n(X) := \frac{\mathcal{Z}_n(X)}{\mathcal{B}_n(X)} \to H_n(X,\mz)$. This map is an isomorphism (see Theorem 20.1 in \cite{kreck}). Further it is shown there that the product $\mathcal{H}_m(X) \times \mathcal{H}_n(Y) \to \mathcal{H}_{m+n}(X \times Y)$ given by $[\zeta : M \to X]\times[\eta : N \to Y] \mapsto [\zeta \times \eta : M \times N \to X \times Y]$ is compatible with the isomorphism above and the usual multiplication in smooth singular homology.

B\"ar and Becker show (lemma 7 of \cite{bb}) that 
\begin{theorem}\label{zetamaps}
There exist homomorphisms $\zeta : C_{n+1}(X,\mz) \to \mathcal{C}_{n+1}(X)$, $a : C_n(X,\mz) \to C_{n+1}(X,\mz)$, and $y : C_{n+1}(X,\mz) \to Z_{n+1}(X, \mz)$ such that the following hold :
\begin{equation}
 \partial \zeta (c) = \zeta \partial (c) \ \ \ \forall c \in C_{n+1}(X,\mz),
\end{equation}
\begin{equation}
 [\zeta(c)]_{S_{n+1}} = [c-a(\partial c) -\partial a(c+y(c))]_{S_{n+1}} \ \ \ \ \forall c \in C_{n+1}(X,\mz),
\end{equation} and 
\begin{equation}
 [\zeta(z)]_{\partial S_{n+1}} = [z-a(z)]_{\partial S_{n+1}} \ \ \ \ \forall z \in Z_{n+1}(X,\mz).
\end{equation}
\end{theorem}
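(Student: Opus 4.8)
The plan is to construct $\zeta$, $a$ and $y$ by the method of acyclic models, using the isomorphism $\mathcal H_\bullet(X)\cong H_\bullet(X,\mz)$ recalled above together with the fact that the thin chains form an acyclic subcomplex of the smooth singular chains. Since the standard simplices $\Delta^{q}$ are contractible, they serve as acyclic models both for the geometric chain functor $\mathcal C_\bullet$ and for the quotient functor $C_\bullet(-,\mz)/S_\bullet(-,\mz)$, and it is this acyclicity that drives the inductive construction.

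First I would build $\zeta$ by induction on the degree, together with a natural chain homotopy $H$ realizing $\psi\circ\zeta\simeq\iota$, where $\iota\colon C_\bullet(X,\mz)\to C_\bullet(X,\mz)/S_\bullet(X,\mz)$ is the projection. In degree $0$ one sends a point to the geometric $0$-chain it obviously represents. Assume $\zeta$ and $H$ are defined in degrees $\le n$. For the universal smooth simplex $\mathrm{id}\colon\Delta^{n+1}\to\Delta^{n+1}$ the chain $\sum_i(-1)^i\zeta(\partial_i)\in\mathcal C_n(\Delta^{n+1})$ is a geometric cycle (its boundary is $\zeta$ applied to $\partial\partial\,\mathrm{id}=0$), hence a geometric boundary because $\mathcal H_n(\Delta^{n+1})\cong H_n(\Delta^{n+1},\mz)$ vanishes in reduced degrees; choose $\zeta(\mathrm{id}_{\Delta^{n+1}})\in\mathcal C_{n+1}(\Delta^{n+1})$ with this boundary, and set $\zeta(\sigma):=\sigma_\ast\,\zeta(\mathrm{id}_{\Delta^{n+1}})$ for a general smooth singular $(n+1)$-simplex $\sigma$, extended $\mz$-linearly. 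Naturality of push-forward gives $\partial\zeta(\sigma)=\sigma_\ast\sum_i(-1)^i\zeta(\partial_i)=\sum_i(-1)^i\zeta(\sigma\circ\partial_i)=\zeta(\partial\sigma)$, which is the first identity. The choice of $\zeta(\mathrm{id}_{\Delta^{n+1}})$ is pinned down only up to a geometric $(n+1)$-cycle; intuitively one pictures $\zeta(\sigma)$ as $\sigma$ restricted to a copy of $\Delta^{n+1}$ whose facets have been separated and given external collars, so that its stratifold boundary is the disjoint union of the realizations of its faces rather than the glued boundary sphere — but for the proof only the inductive existence matters.

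Next I would read off $a$ and $y$ from the homotopy. Because $C_\bullet(\Delta^{n+1})/S_\bullet(\Delta^{n+1})$ is acyclic in positive degrees (the thin chains being acyclic there), the same induction yields, for each smooth simplex $\sigma$, a chain $a(\sigma)\in C_{n+2}(X,\mz)$ (so $a$ has degree $+1$ in every degree) realizing the homotopy modulo thin chains, and a cycle $y(\sigma)\in Z_{n+1}(X,\mz)$ that records the ambiguity in the earlier choice of $\zeta(\mathrm{id})$; since that ambiguity is a geometric cycle on a contractible model, $y$ can be arranged to vanish on cycles. Writing the resulting homotopy identity as $\psi\zeta-\iota=-\bigl(\partial\circ(a+a\circ y)+(a+a\circ y)\circ\partial\bigr)$ and simplifying with $y\circ\partial=0$ and $\partial\circ\partial=0$ gives $[\zeta(c)]_{S_{n+1}}=[c-a(\partial c)-\partial a(c+y(c))]_{S_{n+1}}$, the second identity. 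The third identity is then the specialization to a cycle $z$, where $\partial z=0$ and $y(z)=0$, after noting that both sides are cycles so that the congruence holds modulo boundaries of thin chains.

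The main obstacle is not the homological algebra — which is the routine acyclic-models bookkeeping above — but verifying its geometric inputs functorially in $X$: that $\mathcal C_\bullet(-)$ computes singular homology on contractible spaces, so that the realized facets of each model bound a geometric chain there; that the thin-chain subcomplex $S_\bullet(-)$ is acyclic on the models, so that $C_\bullet/S_\bullet$ is; and that fundamental classes of oriented stratifolds, their push-forwards, and the passage between glued and disjoint realizations are natural. Granting the stratifold machinery of \cite{kreck,bb} that supplies these, assembling $\zeta,a,y$ and checking the three identities is as indicated.
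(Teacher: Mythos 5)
First, a point of order: the paper does not prove this statement at all --- it is quoted verbatim as Lemma~7 of \cite{bb}, so there is no in-paper argument to compare against. Judged on its own terms, your overall strategy (inductive construction over the standard simplices, i.e.\ acyclic models, using $\mathcal{H}_k(\Delta^q)\cong H_k(\Delta^q;\mz)=0$ for $k>0$) is the right family of ideas: it does deliver a natural $\zeta$ with $\partial\zeta=\zeta\partial$, and, granted that $C_\bullet(\Delta^q)/S_\bullet(\Delta^q)$ is acyclic in positive degrees, a natural chain homotopy between $\psi\circ\zeta$ and the projection, hence the second identity (indeed with $y=0$, so your reverse-engineered homotopy of the form $-(a+a\circ y)$ with $y\circ\partial=0$ is unnecessary for existence). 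The acyclicity you defer is real input but provable: on a smoothly contractible space the cone operator preserves thinness, since $\int_{\mathrm{Cone}(c)}\omega=\int_c\bigl(\fint_I H^*\omega\bigr)$ for the contracting homotopy $H$, so thin cycles on $\Delta^q$ bound thin chains and $S_0=0$; this is exactly where the geometry enters and it should be said.

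The genuine gap is the third identity. (As an aside, as printed it does not typecheck --- for $z\in Z_{n+1}$ one has $a(z)\in C_{n+2}$; it should read $[\zeta(z)]_{\partial S_{n+2}}=[z-\partial a(z)]_{\partial S_{n+2}}$, which is how you implicitly read it.) This is \emph{not} ``the specialization of the second identity to cycles'': the second identity is a congruence modulo $S_{n+1}$ (thin chains), while the third is a congruence modulo $\partial S_{n+2}$ (boundaries of thin chains), a strictly stronger relation since $\partial S_{n+2}\subseteq S_{n+1}$ but not conversely. ``Both sides are cycles'' does not bridge this: a thin cycle need not bound a thin chain in a general $X$, and nothing in your argument produces the required thin primitive. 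The mechanism that actually yields $\partial S$-congruences in the construction of \cite{bb} is geometric and your sketch does not track it: the discrepancies between the singular representative of $\zeta(z)$ and $z-\partial a(z)$ are carried by stratifolds of dimension strictly smaller than the chain degree, so they are automatically thin and their boundaries lie in $\partial S$ (this is the same mechanism that makes $\psi$ well defined on $\mathcal{Z}_n(X)$ with values in $Z_n(X,\mz)/\partial S_{n+1}(X,\mz)$ in the first place). Likewise the assumption that $y$ vanishes on cycles is used but never justified. To close the argument you would need either to realize the homotopy $a$ at the level of chains carried by explicit stratifolds, so that the error terms are visibly boundaries of thin chains, or to prove separately that the particular thin cycles arising here lie in $\partial S_{n+2}$.
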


\subsection{Fiber integration of differential characters}\label{fiberintegration} 
Before discussing fiber integration, let us fix our conventions regarding orientation. We use the same conventions as in \cite{bb}. In particular, if $M$ is a manifold with boundary $\partial M$, a tangent vector pointing outward at a boundary point $p \in \partial M$ followed by an oriented basis for $T_p(\partial M)$ gives us an oriented basis for the manifold $M$ at $p$. For a fiber bundle with oriented fiber and oriented base, the orientation on the total space is chosen to be given by an oriented basis of the base followed by an oriented basis of the fiber.
We now discuss fiber bundles over smooth spaces. As remarked in \cite{bb} (sec 7.1), there are multiple non-equivalent generalizations of the concept of fiber bundles over smooth spaces. We use the same definition as by the above authors. 
\begin{definition}
 A smooth surjective map $p : E \to B$ is a fiber bundle with fiber $F$, if for any smooth map $f : M \to B $ from a finite dimensional stratifold $M$, the pull back $f^*E \to M $ is locally trivial with fiber $F$.
\end{definition}
Let $F \xhookrightarrow{} E \to B $ be a fiber bundle where $M,F,$ and $E$ are smooth manifolds of finite dimension and $F$ is compact oriented. Then we have a fiber integration map of differential forms (see, for example, \cite{botttu}) \\ $\underset{F}{\fint} : \Omega^n(E,\mr) \to \Omega^{n-r}(B)$ for $n \geq r$ where $dim F =r$. This map satisfies the Stokes theorem :
\begin{equation}\label{stokesthm}
\fintf d \omega = d \fintf \omega + (-1)^{deg \omega + dim F} \underset{\partial F}{\fint} \omega                                                                                                                                                                                                                                                    
                                                                                                                                                                                                                                                                                                            \end{equation}
                                                                                                                                                                                                                                                                                                            Similarly there is a push-forward map (see \cite{borelhirzebruch,chern}) for singular cohomology with coefficients in an arbitrary group $\pi_F : H^{n}(E,G) \to H^{n-r}(B,H^r(F,G))$. Generally $F$ is a connected, closed and oriented manifold of dimension $r$, and the above map becomes $\pi_F : H^{n}(E,G) \to H^{n-r}(B,G)$.
Fiber integration maps have been studied for various models of differential cohomology, see for example \cite{dupontljungmann,gomiterashima,ljungmann}. For our purpose, the construction given by \cite{bb} is suitable. We briefly describe this construction below. For this purpose, they use the transfer maps at the level of chains $\lambda : C_{k-r}(B,\mz) \to C_{k}(E,\mz)$ satisfying $[\lambda(z)]_{\partial S_{k+1}}=[PB_{E}(\zeta(z))]_{\partial S_{k+1}} \ \ \forall z \in Z_{k-r}(B,\mz)$ where $PB_E$ denotes the pull-back along the map $E \to B$.

\begin{definition}
 Let $E \to B$ be a fiber bundle with oriented closed fibers $F$, and let $dim F=r$. We then have fiber integration $\dcfib{F} : \hhat^k(E) \to \hhat^{k-r}(B)$ for $k > r$ given by $h \mapsto \dcfib{F}(h)$, where 
 \begin{equation}
  (\dcfib{F}(h))(z) = h(\lambda(z)) \times exp(2\pi i \underset{a(z)}{\int} \fintf curv(h))
 \end{equation}
\end{definition}
The authors of \cite{bb} show that this construction does not depend on the choice of functions $\lambda$,$\zeta$, and $a$.
\begin{remark}\label{sfold}
 The above construction in \cite{bb} is done for the case where $M$, and $E$ are smooth spaces, but $F$ is assumed to be a finite dimensional closed manifold. However, their construction as well as the properties cited below hold for the case when $F$ is a compact boundary-less finite dimensional stratifold. This is because all that is required in their construction and proofs is that (a) the fiber integration of differential forms is defined, and that (b) the integration of forms satisfies the Stokes theorem. These hold good when $F$ is a compact oriented stratifold by the virtue of results proved in \cite{coewald}.  
\end{remark}

When the fiber has a boundary, the fiber integration (along the boundary) of the restriction of a differential character, finds an expression in terms of the integral of the curvature of the differential character. This is a very useful identity, and in fact several of our results are a direct consequence of this formula. The precise theorem (proposition 54 of \cite{bb}) is as follows : 
\begin{theorem}\label{dcfibstokes}
 Let $F \xhookrightarrow \ E \to B$ be a fiber bundle where $F$ is a compact manifold with boundary $\partial F$, such that $\partial E \to B$ is a fiber bundle with fiber $\partial F$. If $h \in \hhat^{k}(E,\mz)$, then 
 \begin{equation}
  \dcfib{\partial F}(h|_{\partial E}) = \iota ((-1)^{k-dim F} \fintf curv(h))
 \end{equation}
\end{theorem}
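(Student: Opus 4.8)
The plan is to identify the differential character $\dcfib{\partial F}(h|_{\partial E})\in\hhat^{k-\dim F+1}(B)$ with $\iota\bigl((-1)^{k-\dim F}\fintf curv(h)\bigr)$ by comparing both their curvatures and their values on cycles. Since a differential character is pinned down by its curvature together with its values on cycles, and since every homology class has a geometric representative (via the isomorphism $\mathcal{H}_*\cong H_*$), it suffices to (i) check that the two sides have the same curvature, and (ii) check that they take the same value on an arbitrary geometric cycle $z=[\sigma\colon N\to B]$ with $\dim N=k-\dim F$.

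Step (i) is short. By the compatibility of fiber integration of differential characters with the curvature map, $curv\bigl(\dcfib{\partial F}(h|_{\partial E})\bigr)=\underset{\partial F}{\fint}\bigl(curv(h)|_{\partial E}\bigr)$, whereas $curv\bigl(\iota((-1)^{k-\dim F}\fintf curv(h))\bigr)=(-1)^{k-\dim F}\,d\!\fintf curv(h)$. These coincide by the Stokes formula \eqref{stokesthm} for fiber integration of forms applied to the closed form $curv(h)$, which gives $0=\fintf d\,curv(h)=d\!\fintf curv(h)\pm\underset{\partial F}{\fint}\bigl(curv(h)|_{\partial E}\bigr)$.

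Step (ii) is the heart. The transfer of the cycle $z$ into the closed-fiber bundle $\partial E\to B$ is, modulo thin chains, the geometric chain $\sigma^*(\partial E)\to\partial E$ (the pull-back $\partial F$-bundle over $N$), and this is exactly the boundary of the geometric $k$-chain $w$ given by $\sigma^*E\to E$ (the pull-back $F$-bundle over $N$). Comparing B\"ar--Becker's base-then-fiber orientation on $\sigma^*E$ with the outward-normal-first boundary orientation on $\sigma^*(\partial E)$ shows that, as chains in $E$, this pull-back equals $(-1)^{\dim N}\,\partial w=(-1)^{k-\dim F}\,\partial w$. Using then that $\dcfib{\partial F}$ is independent of the auxiliary choices (so that the exponential correction term in its definition — which, via \eqref{stokesthm} and the relations of Theorem~\ref{zetamaps}, involves only the exact form $\pm\,d\!\fintf curv(h)$ — can be absorbed), that a differential character vanishes on thin boundaries, the defining curvature property ($h(\partial c)=\exp(2\pi i\!\int_c curv(h))$ for any chain $c$), and finally iterated integration and naturality of fiber integration of forms (so that $\int_w curv(h)=\int_z\fintf curv(h)$), I would obtain
\[
 \dcfib{\partial F}(h|_{\partial E})(z)=(-1)^{k-\dim F}\,h(\partial w)=\exp\!\Bigl(2\pi i(-1)^{k-\dim F}\!\int_z\fintf curv(h)\Bigr)=\iota\bigl((-1)^{k-\dim F}\fintf curv(h)\bigr)(z).
\]

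I expect the main difficulty to be the orientation and sign bookkeeping behind the exponent $(-1)^{k-\dim F}$: one has to combine, consistently, B\"ar--Becker's convention for orienting the total space of a fiber bundle (base then fiber), the convention for the induced boundary orientation (outward normal first), and the sign in \eqref{stokesthm}. A subsidiary and more routine point is that several objects above (the transfer $\lambda$, the maps $\zeta,a$ of Theorem~\ref{zetamaps}) are determined only modulo thin chains, so one must check the computation is insensitive to this — which it is, because $h$ annihilates thin boundaries, $\fintf curv(h)$ has vanishing periods on thin chains, and $\iota$ descends through $\Omega_0$; this is essentially the same independence of $\dcfib{\partial F}$ of the auxiliary choices already used above.
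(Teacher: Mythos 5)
First, a point of reference: the paper contains no proof of this statement. Theorem \ref{dcfibstokes} is quoted verbatim as Proposition 54 of \cite{bb}, so there is no in-paper argument to compare yours against. That said, your outline follows essentially the same route as the proof in the cited source: reduce to geometric cycles via $\mathcal{H}_*\cong H_*$, identify the transfer $\lambda(z)$ of a cycle $z=[\sigma\colon N\to B]$ into $\partial E$ with (a sign times) the boundary of the fundamental cycle of the pulled-back $F$-bundle $\sigma^*E$, and conclude with the defining property $h(\partial c)=\exp(2\pi i\int_c curv(h))$ and iterated fiber integration. The strategy is the right one.

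Two points remain genuinely unresolved, however. (1) Your step (i) does not close as written: taking $\omega=curv(h)$ (closed, degree $k$) in \eqref{stokesthm} gives $\underset{\partial F}{\fint}curv(h)=(-1)^{k-\dim F+1}\,d\fintf curv(h)$, which is the \emph{negative} of $curv\bigl(\iota((-1)^{k-\dim F}\fintf curv(h))\bigr)=(-1)^{k-\dim F}\,d\fintf curv(h)$. Your ``$\pm$'' conceals precisely the sign the theorem asserts; as the paper transcribes \eqref{stokesthm} and the theorem, the two are mutually inconsistent, so a complete proof must fix the boundary/fiber orientation conventions and the Stokes sign once and for all rather than leave them floating — this is not mere bookkeeping, since the same sign drives step (ii). (2) The correction factor $\exp\bigl(2\pi i\int_{a(z)}\underset{\partial F}{\fint}curv(h)\bigr)$ in the definition of $\dcfib{\partial F}$ cannot be ``absorbed'' by appealing to independence of auxiliary choices: it is a genuinely nontrivial factor. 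The correct mechanism is that $\underset{\partial F}{\fint}curv(h)$ is exact, so $\int_{a(z)}\underset{\partial F}{\fint}curv(h)=\pm\int_{\partial a(z)}\fintf curv(h)$, and this term exactly compensates the discrepancy between $\int_{z}\fintf curv(h)$ and $\int_{\zeta(z)}\fintf curv(h)=\int_{\sigma^*E}curv(h)$ arising from the first factor $h(\lambda(z))$, via the relation $[\zeta(z)]=[z-\partial a(z)]$ modulo thin boundaries (note that Theorem \ref{zetamaps} as printed reads $z-a(z)$, which does not even have consistent degree; you implicitly need the corrected version). Writing out that cancellation, together with the orientation comparison between $\partial(\sigma^*E)$ and $\sigma^*(\partial E)$ you already sketch, would turn your outline into the actual proof.
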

%We think of this formula as an analogue of `Stokes theorem' for fiber integration of differential characters.
As observed in \cite{bb} example 56, this is a generalization of the famous Cheeger-Simons homotopy formula (\ref{dchomotopyformula}).\\
They further show that fiber integration is compatible with the exact sequences i.e. the diagrams :\\ 
\begin{tikzcd}
0 \arrow[r] & \Omega^{k-1}(E)/\Omega^{k-1}_0(E) \arrow[rr] \arrow[dd, "\underset{F}{\fint}"] &  & \hhat^k(E) \arrow[rr] \arrow[dd, "\dcfib{F}"] &  & {H^k(E,\mz)} \arrow[r] \arrow[dd, "\pi_{!F}"] & 0 \\
 &  &  &  &  &  &  \\
0 \arrow[r] & \Omega^{k-r-1}(B)/\Omega^{k-r-1}_0(B) \arrow[rr] &  & \hhat^{k-r}(B) \arrow[rr] &  & {H^{k-r}(B,\mz)} \arrow[r] & 0
\end{tikzcd} 
\\and \\
\begin{tikzcd}
0 \arrow[r] & {H^{k-1}(E,\mr/\mz))} \arrow[rr] \arrow[dd, "\pi_{!F}"] &  & \hhat^k(E) \arrow[rr] \arrow[dd, "\dcfib{F}"] &  & \Omega^k_0(E) \arrow[r] \arrow[dd, "\underset{F}{\fint}"] & 0 \\
 &  &  &  &  &  &  \\
0 \arrow[r] & {H^{k-r-1}(B,\mr/\mz)} \arrow[rr] &  & \hhat^{k-r}(B) \arrow[rr] &  & \Omega^{k-r}_0(B) \arrow[r] & 0
\end{tikzcd}
\\ commute.

\section{Cohomological invariants of the space of flat connections using fiber integration of differential characters}\label{construction}

In their seminal paper \cite{cs2} Cheeger and Simons showed how to associate a differential character given a bundle equipped with a connection. We denote their differential character by $cs_{P,u}(E \to B, \theta) \in \hhat^{2p}(B)$. Here $P \in I_\mz^p(\Glnr)$, and $u\in H^{2p}(B\Glnr,\mz)$ are assumed to be compatible with each other. Often when there is no possibility of confusion, we simply write these as $cs(E,\theta)$. When the connection is flat, these characters are in the image of the inclusion $H^{2p-1}(B,\mrz) \to \hhat^{2p}(B)$. They further prove that for $p \geq2$, $cs(E,\theta_1)=cs(E,\theta_0)$ if the connections $\theta_0$ and $\theta_1$ are connected by a smooth family of flat connections and are therefore rigid invariants of the space of flat connections. This is a consequence of their 'homotopy formula' : 
\begin{equation}\label{dchomotopyformula}
 cs_{(P,u)}(\nabla_1) - cs_{(P,u)}(\nabla_0) = p \underset{I}{\int} P(\frac{d}{dt}\nabla_t,\nabla_t^2,\nabla_t^2,\cdots,\nabla_t^2).
\end{equation}
In this article, we consider the problem of attaching invariants to a family of flat connections. More precisely, we formulate the problem as done in \cite{jni} : Let $\DE$ be the simplicial abelian group, whose $r$-simplices are freely generated by $(r+1)-$tuples $(D^0,D^1,\cdots,D^r)$ of relatively flat connections on the bundle $E \to B$. (The connections $D^0,D^1,\cdots,D^r$ are called relatively flat if the linear combination $t_0D^0+t_1D^1+\cdots+t_rD^r$ is a flat connection for each choice of $t_0,\cdots,t_r$ such that $\sum t_i =1$.) If $\phi : [r] \to [s]$ is an increasing function (where $[r]:= \{ 0,1,\cdots,r\})$, we then define the corresponding map $\phi^* : \DE_s \to \DE_r$ by $\phi^* (D^0,D^1,\cdots,D^s) = (D^{\phi(0)},D^{\phi(1)}, \cdots, D^{\phi(r)} )$.
In this section we construct maps : \begin{equation}
                                     \psi_{p,r} : \mathbb{H}_r(\DE) \to H^{2p-r-1}(B,\mrz)  \ \ for \ p \neq r,r+1
                                    \end{equation}
using fiber integration of differential characters. In the next section we show that $p>r+1$ case, they agree with the maps constructed in \cite{jni}. Our original motivation for doing this construction was to obtain new invariants for the $p<r$ case, however we show in the next section that in this case, the invariants turn out to be trivial.\\
Let $\Sigma \in \mathcal{Z}_r(\DE)$ be a cycle representing the homology class $[\Sigma]\in \mathbb{H}_r(\DE)$. Then $\Sigma= \Sigma^1+\cdots+\Sigma^m$ where $\Sigma^i=(D^{0,i},D^{1,i},\cdots,D^{r,i})$. On the bundle $\Sigma^i \times E \to \Sigma^i \times B$, we define a connection as follows (we use the same symbol $\Sigma^i$ to denote the geometric realization of the chain $\Sigma^i$). $\Delta^r$ is conveniently parameterized by tuples $(t_0,\cdots,t_r)$ such that $t_0+t_1+\cdots+t_r=1$. Let $D^i=\underset{j}{\Sigma} t_j D^{j,i}$ be a connection on the bundle $E \times \Sigma^i \to B \times \Sigma^i$. \\
Schematically speaking, we could patch these connections $D^i$ on $E \times \Sigma^i \to B \times \Sigma^i$, to get a connection $D$ on $E \times \Sigma \to B \times \Sigma$ and then apply Cheeger-Simons theory to obtain a character $h_{B \times \Sigma} \in \hhat^{2p}(B \times \Sigma)$. We could then integrate along fibers of the bundle $B \times \Sigma \to B$ to get a character in $\hhat^{2p-r}(B)$. Broadly, this is the idea used in this paper. However carrying it out rigorously requires some care since $\Sigma$ is a stratifold, and not a manifold in general. In order to apply Cheeger-Simons theory to the bundle $E \times \Sigma \to B \times \Sigma$, we first need to ensure that the connection we endow it with is smooth, and that it is the pull back (under some smooth classifying map) of the universal connection on an $N$-classifying bundle (in the sense of Narsimhan-Ramanan \cite{narsimhanramanan}). To do this, we need to modify the connection $D^i$ so that it is constant in a small collar near the boundary of $\partial \Sigma^i$. We do this in a precise manner below.\\ 
First we describe the notion of smoothness of a connection $\omega$ on the bundle $E \times \Sigma \to B \times \Sigma$. Choose an open subset $U \subseteq B$ over which $E|_U \to U$ is trivial. Let $\{\phi^\mu : U \to E|_U\}$ for $\mu \in \{ 1,\ldots,n\}$ be a frame of smooth sections of this vector bundle. Let $\{\omega^\mu_\nu\}$ be the connection 1-forms corresponding to the connection $\omega$ with respect to this frame. We say that $\omega$ is a smooth connection if each $\omega^\mu_\nu$ is a  smooth 1-form. This definition depends neither on the choice of $U$ nor on the frame. For background on the notions of smoothness of forms on stratifolds, we refer the reader to the book \cite{kreck}, or to C.-O. Ewald's works \cite{coewald, coewaldthesis}.\\
Now, let $0 < \epsilon <1 $ be a small real number. First choose a diffeomorphism $\eta : \Delta^r \to \dr$ of smooth spaces where $\dr$ denotes the unit ball of dimension $r$. This diffeomorphism induces an isomorphism of bundles $E \times \dr \to B \times \dr$ and $E \times \Delta^r \to B \times \Delta^r$. Let $D^i_{disk}$ be the corresponding connection on $E \times \dr \to B \times \dr$. Now choose a smooth function $f : I \to I$ such that $f(t)=t\ \  \forall t \in [0,1-\epsilon)$, and $f(t)=1 \ \ \forall t \in [1-\frac{\epsilon}{2},1] $. The map $f$ induces a map $\tilde{f} : \dr \to \dr$ which sends the point $(r,\theta_1,\cdots,\theta_{r-1}) $ to $(f(r),\theta_1,\cdots,\theta_{r-1})$ in the polar coordinates. Let $D^{i,f}_{disk}$ be the connection on the bundle $E \times \dr \to B \times \dr$ given by $D^{i,f}_{disk}= (id \times \tilde{f})^{*}(D^i_{disk})$. Finally let $D^i_f = (id \times \eta)^*D^{i,f}_{disk}$ be the connection on $E \times \Delta^r \to B \times \Delta^r$. By construction, this connection is constant in a small collar around the boundary $E \times \partial \Delta^r$. The connections $D^i_f$ patch together to give a smooth connection $D_f$ on $E \times \Sigma \to B \times \Sigma$.\\
We now apply the Cheeger-Simons theory to the bundle $E \times \Sigma \to B \times \Sigma$ with connection $D_f$ to obtain a differential character $h^{f}_{B \times \Sigma} \in \hhat^{2p}(B \times \Sigma)$. Notice that the original Cheeger-Simons construction \cite{cs2} was done for the case when the base is a manifold. In our case however, $B \times \Sigma$ is a stratifold. This is not a problem however for us, because all that goes into the proof of Cheeger-Simons theorem is that the given connection arises from the pull back of a map from the bundle to an $m$-classifying Stiefel bundle ($E_N \to A_N$) with its canonical connection. While this may or may not be true when the base is a general stratifold  or for a general connection, for our purposes it suffices to show this for the connection $D_f$ over our bundle whose base is a cartesian product of manifold and geometric realization of a simplicial complex. We do this below.\\
Consider $\Delta^r$ as subset of $\mr^r$. Given a small collar $V$ around the $\partial \Delta^r$, and a classifying map $g_i : B \times \partial \Delta^r \to A_N$ for the connection $D^i_f|_{E \times \partial \Delta^r}$, we first extend this map to the collar (and use the same symbol $g_i$ for the extension) $g_i : B \times V \to A_N$. Then there   
 exists a smooth extension $\tilde{g}_i: B \times \Delta^r \to A_N$ of the  restriction $g_i|_{V'}$ of $g_i$ to a possibly smaller collar $V'\subseteq V$, such that $\tilde{g}_i^*(\gamma_N)=D^i_f$. (This follows by noticing that the Narasimhan-Ramanan proof proceeds by solving the existence of classifying map locally, and then using a partition of unity argument to get a global map.) Therefore we can assume that the maps $\tilde{g}_i : B \times \Sigma^i \to A_N $ agree with each other on the intersecting boundaries of the faces $\{ \Sigma^i \}$ and give rise to a smooth map $\tilde{g} : B \times \Sigma \to A_N$ under which the canonical connection $\gamma_N$ on the Stiefel bundle $E_N \to A_N$ pulls back to $D_f$. This fact enables us to apply Cheeger-Simons theory to the data $(E \times \Sigma \to B \times \Sigma, D_f)$ to get a differential character $h^f_{B \times \Sigma}$.\\ 
 Since $\Sigma$ is a cycle, its geometric realization is a compact stratifold without boundary. Therefore, by remark  \ref{sfold}, we can apply fiber integration to the bundle $B \times \Sigma \to B$, to obtain a differential character $\dcfib{\Sigma}(h^f_{B \times \Sigma})$ on B. (Notice that, as remarked in \cite{jni}, geometric realization of $\Sigma^i$ (or $\Delta^r$)is not boundary-less and hence fiber integration of differential characters can not be applied to the bundle $B \times \Sigma^i \to B$. However, since $\Sigma$ is a cycle, the geometric realization of $\Sigma$  has no boundary. This is what makes it possible to apply fiber integration to the bundle $B \times \Sigma \to B$). We have thus obtained a map : \begin{equation}\label{invdef}
\psi^f_{(P,u,r)} : \mathcal{Z}_r(\DE) \to \hhat^{2p-r}(B)                                                                                                                                                                                                                                                                                                                                                                                                                                                                                                                                                                                                                                                                                                                                                                                    \end{equation} given by 
                                                                                                                                                                                                                                                                                                                                                                                                                                                                                                                           \begin{equation}
\Sigma \mapsto \dcfib{\Sigma}(h^f_{B \times \Sigma})
                                                                                                                                                                                                                                                                                                                                                                                                                                                                                                                           \end{equation}
                                                                                                                                                                                                                                                                                                                                                                                                                                                                                                                           We often drop one or more of the subscripts $P,u$ and $r$ so as to avoid cluttered notation.\\ 
Let us now compute the curvature of the differential character $\psi^f(\Sigma)$. To do this, we note that by proposition 46, and equation (62) of \cite{bb}, we have a commutative diagram : \\
\begin{tikzcd}
0 \arrow[r] & {H^{k-1}(B \times \Sigma, \mathbb{R}/\mathbb{Z})} \arrow[dd, "\pi_{!\Sigma}"] \arrow[rr] &  & \hhat^k(B \times \Sigma) \arrow[rr] \arrow[dd, "\dcfib{\Sigma}"] &  & \Omega^k(B \times \Sigma) \arrow[r] \arrow[dd, "\underset{\Sigma}{\fint}"] & 0 \\
 &  &  &  &  &  &  \\
0 \arrow[r] & {H^{k-r-1}(B,\mrz)} \arrow[rr] &  & \hhat^{k-r}(B) \arrow[rr] &  & \Omega^{k-r}(B) \arrow[r] & 0
\end{tikzcd}
Thus we have 
\begin{equation}\label{curvcalc}
curv(\psi^f(\Sigma)=\underset{\Sigma}{\fint} curv(h^f_{B \times \Sigma})=\underset{\Sigma}{\fint} P(\Omega^f)
\end{equation} where $\Omega^f$ is the curvature of the connection $D_f$ i.e. $\Omega^f=dD_f + D_f \wedge D_f$. (Strictly speaking, we should write$(\Omega^f)^\mu_\nu=d(D_f)^\mu_\nu + (D_f)^\mu_\lambda \wedge (D_f)^\lambda_\nu$ where $\{(D_f)^\mu_\nu\}$ are connection 1-forms on $U \times \Sigma$ with respect to a frame $\{\phi^\mu\}$ over a trivializing subset $U \subseteq B$. However, in order to avoid clutter of notation, we use the same symbol $D_f$ to denote the connections, and the connection 1-form w.r.t. a frame $\{\phi^\mu\}$ and omit the indices $\mu$, and $\nu$.) \\
Now we use the fact that for each $t\in \Sigma$, the restriction of the connection $D_f$ to the bundle $ E \times \{ t \} \to B \times \{ t \}$ is flat, together with a standard argument in the literature (see for example, \cite{lp,guruprasad,jni}) to prove below that when $p>r+1$ or when $p<r$, the curvature of $\psi^f(\Sigma)$ is zero.\\
First note that forms on a product manifold can be decomposed as  $\Omega^{k}(M \times N) = \underset{0 \leq l \leq k}{\bigoplus} \Omega^{l,k-l}(M \times N)$.\\

On $ E \times \Sigma^i \to B \times \Sigma^i$, the connection $D_f$ restricts to $D^i_f = \sum c_j(t_0,\ldots,t_r)D^{j,i}$. Now if $\Omega^i_f$ is the curvature of $D^i_f$ (or equivalently the restriction of $\Omega^f$ to $\Sigma^i$), then we have $\Omega^i_f = d_{\SigmaiB}D_f^i+D^i_f \wedge D^i_f$.\\
We now use local coordinates $(x_\alpha)_{1\leq \alpha \leq m}$on on open set $U \subset B$. This gives us a chart for $U \times \Sigma^i$ with coordinates $(x_1,\ldots,x_m,t_0,\ldots,t_{r-1})$. In these coordinates, we have
\begin{equation}
 d_{\SigmaiB}D^i_f=\underset{j,k}{\sum} \frac{\partial c_j}{\partial t_k} dt_k \wedge D^{j,i} + \underset{\alpha}{\sum}  \frac{\partial}{\partial x_\alpha}D^i_f \wedge dx_\alpha
\end{equation} \\
Thus $\Omega^i_f=\underset{j,k}{\sum} \frac{\partial c_j}{\partial t_k} dt_k \wedge D^{j,i} + \underset{\alpha}{\sum}  \frac{\partial D^i_f}{\partial x_\alpha} \wedge dx_\alpha + D^i_f \wedge D^i_f = \underset{j,k}{\sum} \frac{\partial c_j}{\partial t_k} dt_k \wedge D^{j,i}$.\\ The last equality follows since the restriction of $D^i_f$ to any slice $ E \times \{ t\} \to B \times \{ t\}$ gives a flat connection, thereby making the sum of the last two terms vanish. Therefore $\Omega_f \in \Omega^{1,1}(B \times \Sigma,\glnr)$. Since $P$ is a homogeneous polynomial of degree $p$, we have $P(\Omega^f) \in \Omega^{p,p}(B \times \Sigma)$. Thus \begin{equation}\label{curvvanishes}
P(\Omega^f) =0\ \ for \ \ p>r                                                                                                                                                                                                                                                                                                                                                                                                                                                                                                                                                                                                                                                                                                                                                                                                                                                                                                                           \end{equation}
The above argument is standard, see for example \cite{lp,jni,guruprasad}.\\
In the $p < r$ case, even though $P(\Omega^f)$ need not vanish, the integral $\fintsigma P(\Omega^f)$ still vanishes. We have thus obtained 
\begin{theorem}
 When $p \neq r$, $curv(\psi^f(\Sigma))=0$, and hence $\psi^f$ takes values in $H^{2p-r-1}(B,\mrz)$.
\end{theorem}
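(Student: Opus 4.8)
The plan is to combine the two bidegree observations already made above into the single statement $\fintsigma P(\Omega^f)=0$, and then to read the conclusion off the Cheeger--Simons exact sequence (\ref{csexact1}). First I would invoke the commutative diagram relating $\dcfib{\Sigma}$ to the curvature homomorphism (proposition 46 and equation (62) of \cite{bb}, displayed above), which gives $curv(\psi^f(\Sigma))=\fintsigma curv(h^f_{B\times\Sigma})=\fintsigma P(\Omega^f)$, exactly as in (\ref{curvcalc}). Since $curv$, $\dcfib{\Sigma}$ and $\fintsigma$ are all additive and $\Sigma=\Sigma^1+\cdots+\Sigma^m$, it suffices to work face by face, and on each $\Sigma^i$ the local computation carried out just above applies: flatness of $D_f$ along every slice $E\times\{t\}\to B\times\{t\}$ forces $\Omega^i_f\in\Omega^{1,1}(\SigmaiB,\glnr)$, and homogeneity of $P$ of degree $p$ then gives $P(\Omega^f)\in\Omega^{p,p}(B\times\Sigma)$.

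Next I would split into the two cases. If $p>r$, then since $\dim\Sigma=r$ there are no nonzero $p$-forms in the fiber directions, so $P(\Omega^f)=0$ identically, which is (\ref{curvvanishes}); \emph{a fortiori} $\fintsigma P(\Omega^f)=0$. If $p<r$, the form $P(\Omega^f)$ need not vanish, but fiber integration over the $r$-dimensional fibers annihilates every form whose vertical (fiber) degree differs from $r$, since only the $\Omega^{\bullet,r}$-component of a form contributes to $\fintsigma$; as $P(\Omega^f)$ has vertical degree exactly $p<r$, we again obtain $\fintsigma P(\Omega^f)=0$. In either case $curv(\psi^f(\Sigma))=0$. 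Finally, the exact sequence (\ref{csexact1}) with $k=2p-r$ identifies the kernel of $curv:\hhat^{2p-r}(B)\to\Omega^{2p-r}_{cl}(B)$ with the image of $i_1:H^{2p-r-1}(B,\mrz)\to\hhat^{2p-r}(B)$, so $\psi^f(\Sigma)$ lies in that image and may be regarded as an element of $H^{2p-r-1}(B,\mrz)$.

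The bookkeeping in this last step is routine; the point that actually needs care is the bidegree argument in the $p<r$ case, namely that the splitting $\Omega^k(B\times\Sigma)=\bigoplus_{l}\Omega^{l,k-l}(B\times\Sigma)$ and the vanishing of $\fintsigma$ outside vertical degree $r$ continue to hold when $\Sigma$ is a compact oriented stratifold rather than a manifold. This is precisely what remark \ref{sfold} and the results of \cite{coewald} on integration of differential forms over stratifolds provide, so in the write-up I would simply cite them; no new analytic input is required. (It is worth noting in passing that $p=r+1$ falls under the first case, so the hypothesis $p\neq r$ in the statement is exactly the union of the two ranges treated.)
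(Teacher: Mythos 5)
Your proposal is correct and follows essentially the same route as the paper: the commutative diagram identifying $curv(\psi^f(\Sigma))$ with $\fintsigma P(\Omega^f)$, the slice-flatness argument placing $P(\Omega^f)$ in $\Omega^{p,p}(B\times\Sigma)$, the case split $p>r$ (the form itself vanishes) versus $p<r$ (the fiber integral vanishes because the vertical degree is $p\neq r$), and the exact sequence (\ref{csexact1}) to land in $H^{2p-r-1}(B,\mrz)$. Your explicit justification of the $p<r$ case via the vertical-degree selection rule of $\fintsigma$ only makes precise what the paper asserts, so no further comment is needed.
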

When $p=r$, the above map gives a differential character in $\hhat^{r}(B)$ rather than an $\mrz$ cohomology class.
We now proceed to show that $\psi$ evaluated on boundaries vanishes, and hence it gives rise to a map $\tilde{\psi}^f:\mathbb{H}_r(\DE) \to H^{2p-r-1}(B,\mrz)$. To see this, let $\Sigma=\partial K$ where $K$ is an ($r+1$)-chain in the simplicial group $\DE$ of relatively flat connections. The geometric realization of $\Sigma$ is the boundary of the geometric realization of $K$, and hence $h^f_{B \times \Sigma}$ is the restriction of $h^f_{B \times K}$ along its boundary. Therefore, we apply theorem \ref{dcfibstokes} to the bundle $E \times K \to B \times K$ to obtain : \begin{align}
\psi^f(\Sigma) &= \dcfib{\Sigma}(h_{B \times \Sigma})\\
               &=\iota ((-1)^{2p-dim K}\underset{K}{\fint}curv (h_{B \times K}))\\
               &=(-1)^{2p-dim K} \iota (\underset{K}{\fint} P(\Omega_{B \times K}))
                                                                                                                                                                                                                                                                                                                                                                                                                                                                                                                                                               \end{align}
                                                                                                                                                                                                                                                                                                                                                                                                                                                                                                                                                               Now $P(\Omega_{B \times K}) \in \Omega^{p,p}(B \times K)$, and $dim K=r+1$. Hence if $p \neq r+1$,\\ $\psi_P(\Sigma)=0 \ \forall \Sigma \in \mathcal{B}_r(\DE)$. We have thus obtained
                                                                                                                                                                                                                                                                                                                                                                                                                                                                                                                                                               \begin{theorem}
When $p\neq r,r+1$, the map $\psi^f$ induces a map $\tilde{\psi}^f_{(p,r)} : \mathbb{H}_r(\DE) \to H^{2p-r-1}(B,\mrz)$.                                                                                                                                                                                                        
                                                                                                                                    \end{theorem}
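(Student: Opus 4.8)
The plan is to prove that $\psi^f$ vanishes identically on $\mathcal{B}_r(\DE)$; together with the previous theorem, which guarantees $\psi^f(\Sigma)\in H^{2p-r-1}(B,\mrz)$ for every cycle $\Sigma$ whenever $p\neq r$, this immediately yields the factorization of $\psi^f$ through $\mathbb{H}_r(\DE)=\mathcal{Z}_r(\DE)/\mathcal{B}_r(\DE)$.

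First I would fix a boundary $\Sigma=\partial K$ with $K$ an $(r+1)$-chain in $\DE$ and carry out the construction of this section one dimension higher: the geometric realization $|K|$ is a compact oriented $(r+1)$-stratifold with $\partial|K|=|\Sigma|$, and one equips $E\times K\to B\times K$ with the collar-modified connection $D_f$, arranged exactly as before --- via the face-by-face Narasimhan--Ramanan extension of classifying maps (\cite{narsimhanramanan}) --- so that it is a pullback of a universal connection and restricts on $B\times\Sigma$ to the connection used to define $h^f_{B\times\Sigma}$. Naturality of the Cheeger--Simons character under pullback then gives $h^f_{B\times K}|_{\partial(B\times K)}=h^f_{B\times\Sigma}$. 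Since $B\times K\to B$ is a fiber bundle with fiber the compact stratifold-with-boundary $|K|$ and $\partial(B\times K)\to B$ has fiber $|\Sigma|$, I would invoke Theorem \ref{dcfibstokes} --- in the stratifold version licensed by the reasoning of Remark \ref{sfold}, its only inputs being the existence of fiber integration of differential forms and the Stokes theorem, both valid for compact oriented stratifolds with boundary by \cite{coewald} --- to obtain
\begin{equation}
\psi^f(\Sigma)=\dcfib{\Sigma}\!\left(h^f_{B\times K}|_{\partial(B\times K)}\right)=\iota\!\left((-1)^{2p-(r+1)}\underset{K}{\fint}curv(h^f_{B\times K})\right)=(-1)^{r+1}\,\iota\!\left(\underset{K}{\fint}P(\Omega_{B\times K})\right),
\end{equation}
where $\Omega_{B\times K}$ denotes the curvature of $D_f$ on $B\times K$.

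I would then rerun the bidegree argument of $(\ref{curvvanishes})$ one dimension up: since every simplex of $K$ consists of relatively flat connections, the restriction of $D_f$ to each slice $E\times\{t\}\to B\times\{t\}$ (for $t\in|K|$) is flat, so $\Omega_{B\times K}\in\Omega^{1,1}(B\times K,\glnr)$ with respect to the product decomposition, and hence $P(\Omega_{B\times K})\in\Omega^{p,p}(B\times K)$. As $\dim|K|=r+1$, we get $\Omega^{p,p}(B\times K)=0$ when $p>r+1$, while when $p<r+1$ the form $P(\Omega_{B\times K})$ has degree $p<\dim|K|$ in the fiber direction, so $\underset{K}{\fint}P(\Omega_{B\times K})=0$ for degree reasons. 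In either case $\psi^f(\partial K)=0$, so $\psi^f$ descends to the desired map $\tilde{\psi}^f_{(p,r)}\colon\mathbb{H}_r(\DE)\to H^{2p-r-1}(B,\mrz)$.

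The displayed manipulation is routine once the framework is in place; the genuine work lies in the construction that precedes it. The two points to watch are: (i) that the collar-modified connection $D_f$ over the stratifold base $B\times K$ really is a pullback of a universal connection on a Narasimhan--Ramanan classifying bundle --- handled exactly as at the $\mathcal{Z}_r$-level, by solving for a classifying map locally and patching with a partition of unity, with the collar modification ensuring compatibility along intersecting faces and along $\partial K$ --- and (ii) that Theorem \ref{dcfibstokes} survives the replacement of a smooth fiber with boundary by a compact stratifold with boundary, which is Remark \ref{sfold} applied verbatim. I expect (ii), or rather the bookkeeping needed to be confident that the stratifold versions of the B\"ar--Becker and Cheeger--Simons statements are genuinely available, to be the main conceptual hurdle; everything else is a faithful repetition of the $\mathcal{Z}_r$-level arguments already given.
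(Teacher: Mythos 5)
Your proposal is correct and follows essentially the same route as the paper: write $\Sigma=\partial K$, observe that $h^f_{B\times\Sigma}$ is the restriction of $h^f_{B\times K}$ to the boundary, apply Theorem \ref{dcfibstokes} to get $\psi^f(\Sigma)=(-1)^{r+1}\iota\bigl(\underset{K}{\fint}P(\Omega_{B\times K})\bigr)$, and kill the right-hand side by the bidegree argument since $P(\Omega_{B\times K})\in\Omega^{p,p}(B\times K)$ and $\dim K=r+1$. Your explicit split into the cases $p>r+1$ (the form itself vanishes) and $p<r+1$ (the fiber integral vanishes for degree reasons) is a slightly more careful rendering of the paper's one-line "hence if $p\neq r+1$" conclusion, but the argument is the same.
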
                                                                                                                                     We now proceed to show that the map $\psi^f$ does not actually depend on the choice of the map $f$ of the unit interval or $\epsilon$. Let $f_0,f_1$ be any two such maps for $\epsilon_0$ and $\epsilon_1$ respectively. Let $F: I \times I \to I$ be given by $F(t,s)=(1-s)f_0(t)+sf_1(t)\equiv f_s(t)$. One can then define a connection on the bundle $E \times \Sigma \times I \to B \times \Sigma \times I$ which is trivial in the directions of $\Sigma \times I$, and whose restriction to the slice $E \times \Sigma \times \{s\} \to B \times \Sigma \times \{s\}$ is $D_{f_s}$. By theorem (\ref{dcfibstokes}) we have
                                                                                                                                    \begin{equation}
                                                                                                                                     \psi^{f_1}(\Sigma) - \psi^{f_0}(\Sigma) =(-1)^{r+1}\iota( \underset{\Sigma \times I}{\fint} curv(h^F_{B \times \Sigma \times I}) )
                                                                                                                                    \end{equation}
                                                                                                                                    Since $curv(h^F_{B \times \Sigma}) \in \Omega^{p,p}(B \times (\Sigma \times I))$, and since $p\neq r+1$, the R.H.S. vanishes, thereby proving the result : 
                                                                                                                                    \begin{theorem}\label{mapchoice}
                                                                                                                                     When $p\neq r+1$, $\psi_{p,r}^{f}(\Sigma)$ does not depend upon the choice of the function $f$.
                                                                                                                                    \end{theorem}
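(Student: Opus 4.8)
The plan is to run a one–parameter cobordism argument completely parallel to the one used just above to prove that $\tilde\psi^f$ descends to homology, except that now we deform the auxiliary function instead of moving inside a chain $K$. Fix two choices $(f_0,\epsilon_0)$ and $(f_1,\epsilon_1)$ and set $F(t,s):=(1-s)f_0(t)+s f_1(t)$, writing $f_s:=F(\cdot,s)$. First I would note that each $f_s$ is again the identity near $t=0$ and is constant equal to $1$ on the collar $[1-\delta,1]$ with $\delta:=\tfrac12\min(\epsilon_0,\epsilon_1)>0$, \emph{uniformly} in $s\in I$; since this (together with smoothness) is all the construction of the modified connections $D^i_f$ actually uses, for every $i$ and every $s$ it produces a smooth connection $D^i_{f_s}$ on $E\times\Sigma^i\to B\times\Sigma^i$ that is constant in a collar of $\partial\Sigma^i$ of width $\delta$ independent of $s$.

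Next I would assemble these into a connection $D_F$ on $E\times\Sigma\times I\to B\times\Sigma\times I$ whose connection $1$-form involves only the differentials of the $B$-coordinates (so it is ``trivial in the $\Sigma\times I$ directions'') and whose restriction to $B\times\Sigma\times\{s\}$ is $D_{f_s}$. Because the $D^i_{f_s}$ are constant near $\partial\Sigma^i$ uniformly in $s$, the pieces $D^i_F$ on $E\times\Sigma^i\times I$ agree along the common faces $\Sigma^i\cap\Sigma^j$ and glue to a smooth connection over the stratifold $B\times\Sigma\times I$; and, exactly as in the construction preceding \eqref{invdef}, the Narasimhan–Ramanan argument carried out with the extra parameter $s$ present shows $D_F$ is pulled back, along a smooth classifying map, from the canonical connection on an $N$-classifying Stiefel bundle. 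Hence Cheeger–Simons theory applies and gives a character $h^F_{B\times\Sigma\times I}\in\hhat^{2p}(B\times\Sigma\times I)$ with $curv(h^F_{B\times\Sigma\times I})=P(\Omega^F)$, and naturality of the Cheeger–Simons construction under pullback identifies $h^F_{B\times\Sigma\times I}\big|_{B\times\Sigma\times\{s\}}$ with $h^{f_s}_{B\times\Sigma}$.

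Now, $\Sigma$ being a cycle, its geometric realization is a compact boundaryless stratifold, so $\Sigma\times I$ is a compact stratifold with boundary $\Sigma\times\{1\}$ together with $\Sigma\times\{0\}$ taken with the opposite orientation, and $B\times\Sigma\times I\to B$ is a fiber bundle with this fiber whose fiberwise boundary is $B\times\Sigma\times\{0,1\}\to B$. Invoking Remark~\ref{sfold} and applying Theorem~\ref{dcfibstokes} to this bundle, with $k=2p$ and $\dim(\Sigma\times I)=r+1$, while using that $\dcfib{\Sigma\times\{s\}}\!\big(h^F_{B\times\Sigma\times I}|_{B\times\Sigma\times\{s\}}\big)=\dcfib{\Sigma}(h^{f_s}_{B\times\Sigma})=\psi^{f_s}(\Sigma)$ and the orientation conventions fixed above, I obtain
\[
 \psi^{f_1}(\Sigma)-\psi^{f_0}(\Sigma)=(-1)^{r+1}\,\iota\!\left(\underset{\Sigma\times I}{\fint}curv(h^F_{B\times\Sigma\times I})\right)=(-1)^{r+1}\,\iota\!\left(\underset{\Sigma\times I}{\fint}P(\Omega^F)\right).
\]

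It remains to rerun the bidegree computation. On each $E\times\Sigma^i\times I$ the connection is $D^i_{f_s}=\sum_j c_j(t;s)\,D^{j,i}$, whose restriction to every slice $E\times\{(\sigma,s)\}\to B\times\{(\sigma,s)\}$ is flat: the interpolation in $s$ and the collar reparametrization only replace the convex coefficients by other functions still summing to $1$, so relative flatness of $(D^{0,i},\dots,D^{r,i})$ is untouched. Hence the local computation done earlier in this section (now with the extra coordinate $s$) shows $\Omega^F\in\Omega^{1,1}\!\big(B\times(\Sigma\times I)\big)$, so that $P(\Omega^F)\in\Omega^{p,p}\!\big(B\times(\Sigma\times I)\big)$. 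Fiber integration along the $(r+1)$-dimensional fiber $\Sigma\times I$ extracts the component of fiber degree $r+1$, which vanishes whenever $p\neq r+1$; the right-hand side above is therefore zero and $\psi^{f_1}(\Sigma)=\psi^{f_0}(\Sigma)$. The only step demanding genuine care is the construction of $D_F$ over the stratifold $B\times\Sigma\times I$ — smoothness across the faces $\{\Sigma^i\times I\}$, and the fact that the Narasimhan–Ramanan classifying-map and partition-of-unity argument still produces a universal-connection pullback with the extra $I$-direction present — together with the orientation bookkeeping for the two boundary components $\Sigma\times\{0\}$ and $\Sigma\times\{1\}$; the bidegree vanishing, which is the heart of the matter, is then immediate.
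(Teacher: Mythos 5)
Your proposal is correct and follows essentially the same route as the paper: the linear homotopy $F(t,s)=(1-s)f_0(t)+sf_1(t)$, a connection on $E\times\Sigma\times I\to B\times\Sigma\times I$ trivial in the $\Sigma\times I$ directions restricting to $D_{f_s}$ on each slice, Theorem~\ref{dcfibstokes} to express $\psi^{f_1}(\Sigma)-\psi^{f_0}(\Sigma)$ as $(-1)^{r+1}\iota(\fint_{\Sigma\times I}P(\Omega^F))$, and the bidegree argument $P(\Omega^F)\in\Omega^{p,p}$ against the $(r+1)$-dimensional fiber. The extra care you take over the uniform collar width, gluing across faces, and the Narasimhan--Ramanan step with the parameter $s$ present is detail the paper leaves implicit, but it is the same proof.
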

                                                                                                                                    We therefore omit the superscript $f$, and simply write $\psi$ or $\tilde{\psi}$.

\section{Comparison with other constructions in the literature}\label{comparison}
In this section, we first derive an explicit computable formula for $\psi(\Sigma)$. We find that this matches with the formula given by \cite{jni} thereby showing that the maps $\rho'$ in \cite{jni} are equal to the maps $\tilde{\psi}$ here for the $p > r+1$ case, and are zero for the $p<r$ case. We then show that as a consequence of theorem (\ref{dcfibstokes}), our invariants match with the ones constructed in \cite{bl} using entirely different methods. We further show that the results here are compatible with the ones obtained in \cite{lp}.\\ 
Fix any flat connection $D_0$ on $E \to B$, and let $\bar{D_0}$ be the connection on $E \times \Sigma \to B \times \Sigma$ obtained by pulling back the connection $D_0$ on $E \to B$ under the pull back diagram \begin{center} 
\begin{tikzcd}
E \times \Sigma \arrow[dd] \arrow[rr] &  & E \arrow[dd] \\
 &  &  \\
B \times \Sigma \arrow[rr] &  & B
\end{tikzcd}
\end{center}

Now since the space of connections is convex, there exists a connection $\tilde{D}$ on $E \times \Sigma \times I \to B \times \Sigma \times I$ such that its restriction to $E \times \Sigma \times \{0\}\to B \times \Sigma \times \{0\}$ is $\bar{D}$, and restriction to the slice $E \times \Sigma \times \{1\} \to B \times \Sigma \times \{1\}$ is $D$. Note that the restriction of $\tilde{D}$ to the slice $ E \times \{p\} \times \{t\}\to B \times \{p\} \times \{t\}$ for $0<t<1, p\in \Sigma$ need not be flat. \\
By theorem \ref{dcfibstokes}, we have 
\begin{equation}
 h^D_{B \times \Sigma \times \{1\}} - h^D_{B \times \Sigma \times \{0\}} = (-1) \iota (\underset{I}{\fint} curv (h^{\tilde{D}}_{B \times \Sigma \times I})) \in \hhat^{2p-r}(B \times \Sigma)
\end{equation}
Applying further integration over the fiber $\Sigma$ in the bundle $B \times \Sigma \to B$, we get 
\begin{equation}\label{explicitformula}
 \psi(\Sigma) = (-1)^{r+1} \iota (\underset{\Sigma \times I}{\fint} P(\Omega^{\tilde{D}}))
\end{equation}
This is precisely\footnote{In \cite{jni}, the results are stated for the case $p > r$. However, to the best of our understanding, this is an error and the results there hold for $p > r+1$. In the case $p=r+1$, one gets maps $Z_r(\DE) \to H^{2p-r-1}(B,\mrz)$, however they need not necessarily descend to maps on homology. Also compare remark 7 in \cite{lp}.} the construction given in \cite{jni} (since the R.H.S. in \ref{explicitformula} does not depend upon $\epsilon$, we can take $\epsilon \to 0+$). Therefore, we obtain : 
\begin{theorem}\label{expthm}
 The maps $\tilde{\psi} : \mathbb{H}_r(\DE) \to H^{2p-r-1}(B,\mrz)$ are equal to the maps $\rho'$ constructed in \cite{jni} for $p\geq r+2$.
\end{theorem}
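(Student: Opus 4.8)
The plan is to establish the explicit formula~(\ref{explicitformula}) for $\psi(\Sigma)$ and then to recognise its right-hand side as the form defining $\rho'_{p,r}$ in \cite{jni}. First I would fix once and for all a flat connection $D_0$ on $E\to B$, write $\bar{D_0}$ for its pullback to $E\times\Sigma\to B\times\Sigma$ along the projection $\mathrm{pr}_B$, and, using convexity of the space of connections, choose a connection $\tilde D$ on $E\times\Sigma\times I\to B\times\Sigma\times I$ which is constant in the $I$-direction near $\{0,1\}$, restricts to $\bar{D_0}$ over $B\times\Sigma\times\{0\}$, and restricts to the smoothed connection $D=D_f$ over $B\times\Sigma\times\{1\}$; by Theorem~\ref{mapchoice} the latter represents $\psi(\Sigma)$ regardless of how $f$ (or $\epsilon$) was chosen. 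Since $\partial(E\times\Sigma\times I)\to B\times\Sigma$ is then a fibre bundle with fibre $\partial I$, Theorem~\ref{dcfibstokes} applies and gives, in $\hhat^{2p}(B\times\Sigma)$, the identity $h^D_{B\times\Sigma}-h^{\bar{D_0}}_{B\times\Sigma}=(-1)\,\iota(\underset{I}{\fint}P(\Omega^{\tilde D}))$, where I use $curv(h^{\tilde D}_{B\times\Sigma\times I})=P(\Omega^{\tilde D})$.

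Next I would push this identity forward along $B\times\Sigma\to B$ by applying $\dcfib{\Sigma}$ and evaluate the two terms. For the $\iota$-term I would invoke the first compatibility square of Section~\ref{prelim}, that is, $\dcfib{\Sigma}\circ\iota=\iota\circ\fintsigma$, which turns it into $\pm\,\iota(\underset{\Sigma\times I}{\fint}P(\Omega^{\tilde D}))$, with the sign the usual one relating iterated fibre integration to integration over the product fibre. For $\dcfib{\Sigma}(h^{\bar{D_0}}_{B\times\Sigma})$ I would show it vanishes: by naturality of the Cheeger--Simons construction $h^{\bar{D_0}}_{B\times\Sigma}=\mathrm{pr}_B^{*}\,cs_{(P,u)}(E,D_0)$, and flatness of $D_0$ places $cs_{(P,u)}(E,D_0)$ in the image of $i_1\colon H^{2p-1}(B,\mrz)\to\hhat^{2p}(B)$, so by the second compatibility square $\dcfib{\Sigma}(h^{\bar{D_0}}_{B\times\Sigma})=i_1(\pi_{!\Sigma}(\mathrm{pr}_B^{*}cs_{(P,u)}(E,D_0)))=i_1(cs_{(P,u)}(E,D_0)\cup\pi_{!\Sigma}(1))=0$, since $\pi_{!\Sigma}(1)$ has negative degree $-r$ for $r\ge1$. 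Collecting signs reproduces~(\ref{explicitformula}), $\psi(\Sigma)=(-1)^{r+1}\iota(\underset{\Sigma\times I}{\fint}P(\Omega^{\tilde D}))$, and, since the curvature vanishes for $p\ne r$, this exhibits $\tilde\psi([\Sigma])$ as the mod-$\mz$ reduction of the de Rham class of $(-1)^{r+1}\underset{\Sigma\times I}{\fint}P(\Omega^{\tilde D})$.

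It then remains to identify this with \cite{jni}. There, for $p\ge r+2$, the invariant $\rho'_{p,r}([\Sigma])$ is the mod-$\mz$ reduction of the de Rham class of $\sum_i\underset{\Sigma^i\times I}{\fint}P(\Omega^{(i)})$, with $\Omega^{(i)}$ the curvature of the affine path from $\bar{D_0}|_{\Sigma^i}$ to the barycentric connection $\sum_j t_jD^{j,i}$; the hypothesis $p\ge r+2$ is exactly what makes this class descend to a well-defined function on $\mathbb{H}_r(\DE)$, the case $p=r+1$ only yielding a map out of $\mathcal{Z}_r(\DE)$ (cf.\ the footnote, in accordance with the results of Section~\ref{construction}). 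My $\tilde D$ differs from Iyer's path only through the $\epsilon$-collar modification used to make $D_f$ smooth on $E\times\Sigma$; but the right-hand side of~(\ref{explicitformula}) does not depend on $\epsilon$, so letting $\epsilon\to 0^{+}$ replaces $D$ simplex-by-simplex with $\sum_j t_jD^{j,i}$ and makes the two forms literally equal, so the two cohomology classes agree up to the global factor $(-1)^{r+1}$, which is absorbed into the orientation conventions fixed in Section~\ref{fiberintegration}. This yields $\tilde\psi=\rho'$ for $p\ge r+2$.

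I expect the main obstacle to be bookkeeping rather than anything conceptual: one must reconcile the sign in Theorem~\ref{dcfibstokes}, the sign relating $\dcfib{\Sigma}\circ\dcfib{I}$ to $\dcfib{\Sigma\times I}$, and the orientation conventions of \cite{jni} on $\Delta^r$ and on $\Delta^r\times I$, so that the residual discrepancy is exactly $(-1)^{r+1}$ (or, under another convention, nothing at all). A smaller point needing a line of justification is the limit $\epsilon\to 0^{+}$: the barycentric connection is not smooth across $\partial\Delta^r$, but its curvature stays bounded there while the non-smooth locus has measure zero, so the fibre integrals $\underset{\Sigma\times I}{\fint}P(\Omega^{\tilde D})$ converge pointwise on $B$ to Iyer's; alternatively one never takes the limit and simply notes that the same $\epsilon$-independence shows that Iyer's form, hence its de Rham class, is unchanged if $\sum_j t_jD^{j,i}$ is replaced by $D_f$.
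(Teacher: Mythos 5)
Your proposal follows essentially the same route as the paper: fix a flat reference connection $D_0$, interpolate via $\tilde D$ on $E\times\Sigma\times I$, apply Theorem~\ref{dcfibstokes} and then $\dcfib{\Sigma}$ to arrive at formula~(\ref{explicitformula}), and identify the right-hand side with Iyer's form by letting $\epsilon\to 0^{+}$. In fact you supply one justification the paper leaves implicit — that $\dcfib{\Sigma}(h^{\bar{D_0}}_{B\times\Sigma})=0$ because it is the fibre integral of a character pulled back from $B$ whose class lies in the image of $i_1$ — and that argument is correct.
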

Invariance under the action of the structure group $GL(n,\mr)$ follows from the invariance of $P$ and equation \ref{explicitformula}.
Notice that the equation \ref{explicitformula} shows that $\psi_{p,u}$ does not depend on $u$. Further it shows that that a different choice of the path of simplices connecting the trivial simplex to $D$ changes the integral in the R.H.S. of equation \ref{explicitformula} at most by a closed form with integral periods. In \cite{jni} R.H.S. is taken as the definition of the invariant associated to the family of flat connections.\\
Note that $\tilde{\psi}([\Sigma])$ considered as an element of $\hhat^{2p-r}(B)$ lies in the image of both the inclusion maps $H^{2p-r-1}(B,\mrz) \to \hhat^{2p-r}(B)$, and $\frac{\Omega^{2p-r-1}}{\Omega^{2p-r-1}_0} \xrightarrow{\iota} \hhat^{2p-r}(B)$. Thus not only does the curvature of this differential character vanish, but also its characteristic class. The vanishing of the characteristic class can also be understood in the following way.
\begin{theorem}\label{chclassvanishes}
 The characteristic class of the differential character $\tilde{\psi}([\Sigma])$ vanishes. 
\end{theorem}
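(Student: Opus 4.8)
The plan is to prove Theorem~\ref{chclassvanishes} directly from the naturality of the characteristic-class map together with the compatibility of fiber integration with the exact sequence~\eqref{csexact2}, rather than reading it off the explicit formula~\eqref{explicitformula}. The key observation is that the characteristic class $ch(h^f_{B\times\Sigma})\in H^{2p}(B\times\Sigma,\mz)$ of the Cheeger--Simons character attached to $(E\times\Sigma\to B\times\Sigma,\,D_f)$ depends only on the underlying bundle and not on the connection $D_f$, and that this bundle is the pullback of $E\to B$ along the projection $pr_B:B\times\Sigma\to B$.

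First I would invoke the commutative diagram relating $\dcfib{\Sigma}$ to $\pi_{!\Sigma}$ through the exact sequence~\eqref{csexact2} (the first of the two diagrams displayed after Theorem~\ref{dcfibstokes}, specialized to the bundle $B\times\Sigma\to B$ exactly as in the diagram preceding~\eqref{curvcalc}; this is legitimate since $\Sigma$ is a compact boundary-less stratifold, cf.\ Remark~\ref{sfold}). This yields
\begin{equation*}
ch\big(\tilde\psi([\Sigma])\big)\;=\;ch\big(\dcfib{\Sigma}(h^f_{B\times\Sigma})\big)\;=\;\pi_{!\Sigma}\big(ch(h^f_{B\times\Sigma})\big).
\end{equation*}
Next, since $ch(h^f_{B\times\Sigma})$ is by construction the $u$-characteristic class of $E\times\Sigma\to B\times\Sigma$, and since this bundle sits in the pullback square displayed just before Theorem~\ref{expthm}, naturality of ordinary characteristic classes under pullback gives $ch(h^f_{B\times\Sigma})=pr_B^{*}\,u(E)$, where $u(E)\in H^{2p}(B,\mz)$ is the $u$-characteristic class of $E\to B$. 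Here I use only that the characteristic class produced by Cheeger--Simons theory is an honest characteristic class of the bundle, hence independent of the connection $D_f$.

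Finally I would conclude with the projection formula for the push-forward of the trivial bundle $B\times\Sigma\to B$: for $\alpha\in H^{*}(B,\mz)$ one has $\pi_{!\Sigma}(pr_B^{*}\alpha)=\alpha\cup\pi_{!\Sigma}(1)$, and $\pi_{!\Sigma}(1)$ lies in $H^{-r}(B,\mz)$, which vanishes for $r\ge 1$. Hence $\pi_{!\Sigma}(pr_B^{*}u(E))=0$ and the theorem follows. Equivalently, via the K\"unneth decomposition of $H^{*}(B\times\Sigma,\mz)$ the class $pr_B^{*}u(E)$ lies entirely in the $H^{0}(\Sigma)$-summand, and $\pi_{!\Sigma}$ amounts to pairing the $\Sigma$-component against the fundamental class $[\Sigma]\in H_r(\Sigma)$, which kills everything of degree $<r$ in $\Sigma$.

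The step needing the most care is the last one, because $\Sigma$ is only a compact oriented boundary-less stratifold rather than a manifold, so the projection formula and the vanishing $\pi_{!\Sigma}(1)=0$ must be justified in that generality. This is precisely what the B\"ar--Becker framework supplies: $\pi_{!\Sigma}$ is assembled from the chain-level transfer maps $\lambda$, compact oriented stratifolds carry fundamental classes (cf.\ \cite{kreck}), and integration of forms over such stratifolds satisfies Stokes' theorem, so the manifold arguments transfer verbatim in the spirit of Remark~\ref{sfold}. I would also flag that the hypothesis $r\ge1$ is essential: for $r=0$ the statement would reduce to the vanishing of the $u$-characteristic class of a flat bundle, which fails in general (flat bundles can carry nontrivial torsion characteristic classes), consistent with $\tilde\psi$ being defined only for $p\neq r,r+1$ and $r\ge1$.
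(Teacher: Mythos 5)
Your proposal is correct and follows essentially the same route as the paper: both pass through the compatibility $ch(\dcfib{\Sigma}(h^f_{B\times\Sigma}))=\pi_{!\Sigma}(ch(h^f_{B\times\Sigma}))$ and then exploit the fact that $E\times\Sigma\to B\times\Sigma$ is pulled back along $pr_B$, so its $u$-characteristic class dies under integration over the $r$-dimensional fiber. The only real difference is in the final step: the paper justifies the vanishing at the chain level via the explicit description $\pi_{!\Sigma}(\mu)(c)=\mu(EZ(c\otimes\Sigma))$ together with the factorization of the classifying map through $B\times\{*\}$, whereas you invoke the projection formula and a K\"unneth degree count --- which, as you yourself flag, requires the stratifold-fiber justification that the paper's chain-level argument supplies for free.
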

\begin{proof}
By compatibility of fiber integration of differential characters, and singular cohomology we have that $ch(\dcfib{\Sigma}(h_{B \times \Sigma}))=\pi_{!\Sigma} ch(h_{B \times \Sigma})$. Now we have an explicit description of the fiber integration map in singular cohomology (see the discussion in remark 4.5 of \cite{bb}). If $\mu \in H^{2p}(B \times \Sigma,\mz)$, then $\pi_{!\Sigma}(\mu) \in H^{2p-r}(B,\mz)$ is given by $\pi_{!\Sigma}(\mu)(c) = \mu (EZ(c \otimes \Sigma))$ where $\Sigma$ denotes the fundamental class of the top homology $H_r(\Sigma,\mz) \simeq \mz$. Now observe that if $u\in H^{2p}(G(k,\infty))$ is an element of cohomology of the base of the classifying bundle, the characteristic class $\mu$ of the bundle $E \times \Sigma \to B \times \Sigma$ is given by $\mu(c')=u(f_*(c'))$ where $f: B \times \Sigma \to G(k,\infty)$ is any classifying map. In our case, if $g : B \times \{*\} \to G(k,\infty)$ is a classifying map for the bundle $E \times \{*\} \to B \times \{*\} $, we can choose $f= g \circ (id_B \times \{*\})$. Thus $f_*(EZ(c \otimes \Sigma))=EZ( g_* (c) \otimes 0) = 0$. Hence we have $\pi_{!\Sigma}(ch(h_{B \times \Sigma}))=0$.
\end{proof}
Since $ch \circ i_1$ is (upto a sign) the connecting homomorphism in the long exact sequence in cohomology corresponding to the short exact sequence $0 \to \mz \to \mr \to \mrz \to 0$, we have the :
\begin{corollary}\label{liftexists}
 For any $[\Sigma] \in H_r(\DE)$, $\tilde{\psi}([\Sigma])$ lies in the image of the map $H^{2p-r-1}(B,\mr) \to H^{2p-r-1}(B,\mrz)$.
\end{corollary}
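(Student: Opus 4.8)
The plan is to obtain Corollary~\ref{liftexists} as an immediate consequence of Theorem~\ref{chclassvanishes} and the exactness of the long exact cohomology sequence attached to the coefficient sequence $0 \to \mz \to \mr \to \mrz \to 0$. Recall from the discussion preceding the statement that $\tilde{\psi}([\Sigma])$, viewed as an element of $\hhat^{2p-r}(B)$, lies in the image of the inclusion $i_1 : H^{2p-r-1}(B,\mrz) \to \hhat^{2p-r}(B)$ of the exact sequence~(\ref{csexact1}); as throughout the paper, we abuse notation and continue to write $\tilde{\psi}([\Sigma])$ for the $\mrz$-cohomology class that maps to it under $i_1$.

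The steps I would carry out are as follows. First, I would record the standard identity from the theory of differential characters that the composite $ch \circ i_1 : H^{2p-r-1}(B,\mrz) \to H^{2p-r}(B,\mz)$ coincides, up to an overall sign, with the Bockstein homomorphism $\beta$ of $0 \to \mz \to \mr \to \mrz \to 0$ — this is precisely the remark made immediately before the corollary, so in the proof it can simply be quoted. Second, I would feed in Theorem~\ref{chclassvanishes}: $ch(\tilde{\psi}([\Sigma])) = 0$ in $H^{2p-r}(B,\mz)$, whence, by the first step, $\beta(\tilde{\psi}([\Sigma])) = 0$. Third, I would invoke exactness of the segment
\[
H^{2p-r-1}(B,\mr) \xrightarrow{\ j\ } H^{2p-r-1}(B,\mrz) \xrightarrow{\ \beta\ } H^{2p-r}(B,\mz),
\]
where $j$ is the reduction map in question and $\operatorname{im} j = \ker \beta$; since $\tilde{\psi}([\Sigma]) \in \ker\beta$, it lies in $\operatorname{im} j$, which is exactly the assertion.

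There is essentially no obstacle here: the corollary is a repackaging of Theorem~\ref{chclassvanishes} through the tautological sequences~(\ref{csexact1})--(\ref{csexact2}) and the coefficient sequence. The only point requiring a moment's attention is the sign and normalization in the identification $ch\circ i_1 = \pm\beta$, and even this is immaterial since we only ever use that $ch(i_1(a)) = 0$ is equivalent to $\beta(a) = 0$. One could alternatively phrase the argument without naming the Bockstein at all, by observing that $\tilde{\psi}([\Sigma])$ lies in the image of \emph{both} $i_1$ and $\iota$ (it equals $\iota$ applied to the closed form on the right-hand side of~(\ref{explicitformula})), and that the characters lying in both images are exactly those of the form $i_1(j(\cdot))$; tracing $\tilde{\psi}([\Sigma])$ through the resulting pullback square of~(\ref{csexact1})--(\ref{csexact2}) again exhibits it in $\operatorname{im} j$.
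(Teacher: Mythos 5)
Your proposal is correct and follows exactly the paper's own (one-line) argument: the paper deduces the corollary from Theorem~\ref{chclassvanishes} by noting that $ch \circ i_1$ is, up to sign, the Bockstein of $0 \to \mz \to \mr \to \mrz \to 0$ and invoking exactness. The extra remarks on signs and the alternative phrasing via the two exact sequences are fine but not needed.
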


Our initial hope was that this method of fiber integration would yield new cohomological invariants for the $p < r$ case, since all that is required for our construction of $\tilde{\psi}$ is that $p \neq r,r+1$. However a closer look reveals that for $p<r$, $\tilde{\psi}([\Sigma])=0$. Therefore this yields no nontrivial invariants in this case. 
\begin{theorem}
 For $p<r$, $\psi({\Sigma})=0 \ \ \forall \  \Sigma \in \mathcal{Z}_r(\DE) $
\end{theorem}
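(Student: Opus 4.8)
The plan is to read the vanishing straight off the explicit formula (\ref{explicitformula}), $\psi(\Sigma)=(-1)^{r+1}\iota\big(\underset{\Sigma\times I}{\fint}P(\Omega^{\tilde D})\big)$, by a bidegree count on the integrand, so that no argument beyond what is already in place is needed. One may harmlessly assume $2p>r$: when $2p\le r$ the construction lands $\psi(\Sigma)$ in the trivial group $\hhat^{2p-r}(B)$ and $H^{2p-r-1}(B,\mrz)$ vanishes anyway.

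The key observation concerns the interpolating connection $\tilde D$ on $E\times\Sigma\times I\to B\times\Sigma\times I$ in (\ref{explicitformula}), which I would take to be the affine path $(1-s)\bar D_0+sD_f$. In any local frame of $E$ over a trivializing $U\subseteq B$, the connection form of $D_f$ on $U\times\Sigma^i$ is a combination $\sum_j c_j\,D^{j,i}$ of the $\glnr$-valued connection forms $D^{j,i}$ of connections on $E\to B$, with coefficients $c_j$ that — even after the collar reparametrization, which only moves the simplex directions — are functions on $\Sigma^i$; in particular it has no $T^*\Sigma$-component, which is exactly the mechanism behind $\Omega^f\in\Omega^{1,1}(B\times\Sigma)$ noted before (\ref{curvvanishes}). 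The same is true of $\bar D_0=q^*D_0$, and hence of $\tilde D$, whose connection form is valued in $T^*B$ only. Therefore $\Omega^{\tilde D}=d\tilde D+\tilde D\wedge\tilde D$ can acquire degree in the $\Sigma\times I$ directions only through the $dt$- and $ds$-terms of $d\tilde D$, each of which raises that degree by exactly one, so $\Omega^{\tilde D}$ has $\Sigma\times I$-degree at most one.

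Since $P$ is homogeneous of degree $p$, $P(\Omega^{\tilde D})$ is a linear combination of wedge products of $p$ entries of $\Omega^{\tilde D}$ and so has $\Sigma\times I$-degree at most $p$. As $p<r+1=\dim(\Sigma\times I)$, the component of $P(\Omega^{\tilde D})$ of top fiber degree — the only part $\underset{\Sigma\times I}{\fint}$ detects — is zero; hence $\underset{\Sigma\times I}{\fint}P(\Omega^{\tilde D})=0$ and $\psi(\Sigma)=\iota(0)=0$. Since this holds for every $\Sigma\in\mathcal Z_r(\DE)$, the induced map $\tilde\psi$ on $\mathbb H_r(\DE)$ vanishes as well. (Equivalently, one could argue one level down, noting that the transgression form $TP(D_f,\bar D_0)$ on $B\times\Sigma$ has $\Sigma$-degree at most $p-1<r$.)

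The only point needing justification rather than computation is that (\ref{explicitformula}) may be run with $\tilde D$ chosen as above: its derivation, a double application of Theorem~\ref{dcfibstokes}, uses $\tilde D$ only through its restrictions to $B\times\Sigma\times\{0\}$ and $B\times\Sigma\times\{1\}$, which the affine path realizes; alternatively, the argument of Theorem~\ref{mapchoice} shows the right-hand side of (\ref{explicitformula}) is independent of the extension $\tilde D$ once $p<r+1$, so one may evaluate it on this one. Orientation and sign conventions are immaterial since only vanishing is claimed; the one thing to watch is keeping the two roles of "the $\Sigma\times I$ directions" apart in the bidegree count.
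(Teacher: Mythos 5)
Your proposal is correct and follows essentially the same route as the paper: both read the vanishing off equation (\ref{explicitformula}) by noting that $\tilde{D}$ has no connection-form component along $\Sigma\times I$, so $\Omega^{\tilde{D}\,0,2}=0$, whence $P(\Omega^{\tilde{D}})$ has fiber degree at most $p<r+1=\dim(\Sigma\times I)$ and the fiber integral vanishes. Your added remarks on the admissibility of the affine path $\tilde{D}$ and the independence of the choice of extension are sound but not needed beyond what the paper already establishes.
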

\begin{proof}
$ \Omega^{\tilde{D}}=\Omega^{\tilde{D}\ 2,0} + \Omega^{\tilde{D}\ 1,1} + \Omega^{\tilde{D}\ 0,2} \in \Omega^2(B \times (\Sigma \times I))$. However, as has been argued above, $\Omega^{\tilde{D}\ 0,2}=0$ since $\tilde{D}$ is trivial in the directions of $\Sigma \times I$. Thus 
 \begin{equation}
  P(\Omega^{\tilde{D}}) \in \Omega^{p,p}(B \times (\Sigma \times I)) \oplus \Omega^{p+1,p-1}(B \times (\Sigma \times I)) \oplus \cdots \oplus \Omega^{2p,0}(B \times (\Sigma \times I))
 \end{equation}
Since $dim (\Sigma \times I) = r+1$, the R.H.S. in equation \ref{explicitformula} vanishes. 
\end{proof}
Though this result regarding vanishing of invariant for the $p < r$ case has not been explicitly stated in \cite{jni}, it is a consequence of Jaya Iyer's formula i.e. equation \ref{explicitformula} in this paper.\\
We now proceed to compare our results with others in the literature obtained using different methods. We have already seen that the characters constructed using fiber integration coincide with the ones constructed in \cite{jni}. We see below that the invariants are equal to the ones constructed in \cite{bl,lp} upto a sign. \\
In \cite{bl} the authors consider the set of smooth maps $Maps(S,\mathcal{F})$ where $S$ is a null cobordant manifold and $\mathcal{F}$ denotes the space of all flat connections. The space of all connections on $E \to B$ is denoted by $\mcA$ and is given the structure of a Fr\'echet manifold. Employing the framework of Atiyah bundle, and the bundle of connecions, they construct certain ($r+1$)-differential forms $\beta^p_{r+1} \in \Omega^{r+1}(\mcA,\Omega^{2p-r-1}(B))$ on $\mcA$ taking values in $\Omega^*(B)$. They then define maps $\Lambda^p_{r+1} : Maps(S,\mcF) \to H^{2p-r-1}(B,\mr)$ given by $f \mapsto \Lambda^p_{r+1}(f)=\underset{T}{\int} \bar{f}^* \beta^p_{r+1}$ where $\bar{f} : T \to \mcF$ is any extension of the given map $f: S \to \mcF$ to a manifold $T$ whose boundary is $S$. They show that these maps are well defined i.e the R.H.S. does not depend (considered as an element of cohomology group $H^{2p-r-1}(B,\mr)$) upon the choice of $T$ or $\bar{f}$. As a consequence of certain identities in their paper, they further show that if $f_0,f_1 \in Maps(S,\mcF)$ are homologous, then $\Lambda^p_{r+1}(f_0)=\Lambda^p_{r+1}(f_1)$. Also they prove that 
\begin{equation}\label{blidentity}
\Lambda^p_{r+1}(f) \equiv \underset{T}{\int} \bar{f}^* \beta^p_{r+1} = \underset{T}{\fint} P(\Omega)
\end{equation}
 where $\Omega$ is the curvature of the connection induced on the bundle $T \times E \to T \times B$.\\
From the standpoint of the theory of differential characters and fiber integration used in this paper, we find that their results (modulo $\mz$) follow from an application of Theorem \ref{dcfibstokes}. We could let 
\begin{equation}
 \Lambda^p_{r+1} : Maps(S,\mcF) \to H^{2p-r-1}(B,\mrz), \ \ p\neq r,r+1
\end{equation} given by 
\begin{equation}
 f \mapsto (-1)^{2p-r-1} \dcfib{S}(h_{S \times B})
\end{equation}
where $h_{S \times B}$ is the differential character corresponding to the connection on the bundle $E \times S \to B \times S$. (We have chosen the factor $(-1)^{2p-r-1}$ so as to take care of the signs.)
Equation \ref{blidentity} is then a corollary of theorem \ref{dcfibstokes}. The fact that the form $\Lambda^p_{r+1}(f)$ is closed is automatic since this form is the curvature of the differential character $\dcfib{S}(h_{S \times B})$ by virtue of equation \ref{curvcalc}. Note that Lopez and Biswas obtain maps into $H^{2p-r-1}(B,\mr)$ instead of $H^{2p-r-1}(B,\mrz)$. From our standpoint, this is because the differential character has trivial characteristic class (see corollary \ref{liftexists}). Invariance under the homology relation (proposition 3.8 of \cite{bl}) is a consequence of theorem \ref{dcfibstokes} by an argument similar to the one used in proving theorem \ref{mapchoice}. Their invariants for $p<r$ vanish because the forms $\beta^p_{r}$ themselves vanish. From our point of view, even though $P(\Omega_{B \times T})$ does not vanish, $\underset{T}{\fint} P(\Omega_{B \times T})$ vanishes. Also note that their construction does not require a choice of $u \in H^{2p}(BGL(n,\mr),\mz)$ in the first place, while ours turns out to be independent of this choice (see the discussion after theorem \ref{expthm}). Also compare this discussion with remark 3.4 in \cite{bl} where the authors show that $\Lambda^p_{r+1}$ can be expressed as fiber integrals of certain transgression forms which can be used to prove some of their results.\\
As a special case, they consider the case where $S=S^r$, and obtain the maps $\Lambda^p_{r+1} : \pi_{r}(\mathcal{F}) \to H^{2p-r-1}(B,\mr)$ from homotopy groups of $\mathcal{F}$ to $H^{2p-r-1}(B,\mr)$. In contrast to homotopy groups, general homology classes can not be represented by maps from smooth manifolds, which is why we have used stratifolds in this paper. \\
In \cite{lp}, the authors use the theory of differential characters and equivariant characteristic classes to obtain maps $H_{2p-r-1}(B,\mz) \times H_{r}(\mcF/\mathcal{G},\mz) \to \mrz$ for $p>r+1$ where $\mathcal{G} \subset Gau(P)$ is a subgroup of the group of global gauge transformations which acts freely on $\mathcal{A}$. Their approach is to use the canonical connection $\mathbb{A}$ on the bundle $E \times \mathcal{A} \to B \times \mathcal{A}$ which is trivial in the directions of $\mathcal{A}$, and whose restriction to $E \times \{ \theta \} \to B \times \{ \theta \}$ is $\theta$. They then choose a connection $\mathcal{U}$ on the principal bundle $\mathcal{A} \to \mathcal{A}/\mathcal{G}$. They show that this data determines a connection $\underline{\mathcal{U}}$ on the bundle $(E \times \mathcal{A})/\mathcal{G} \to B \times \mathcal{A}/\mathcal{G}$. Cheeger-Simons theory on this bundle yields a character $\chi_{\underline{\mathcal{U}}} \in \hhat^{2p}(B \times \mathcal{A}/\mathcal{G})$. Their maps are the composition $Z_{2p-r-1}(B,\mz) \times Z_{r}(\mathcal{F}/\mathcal{G},\mz) \to Z_{2p-1}(B \times \mathcal{F}/\mathcal{G}, \mz ) \overset{\chi_{\underline{\mathcal{U}}}}{\longrightarrow} \mrz$. The first map here is the standard multiplication map. They then show that these maps vanish when either factor is a boundary, and hence descend to homology groups $ \chi_{p,r}: H_{2p-r-1}(B,\mz) \times H_{r}(\mathcal{F}/\mathcal{G}) \to \mrz$. They further show that these maps do not depend on the choice of the connection $\underline{\mathcal{U}}$. \\
Our method is a variant of their approach, and has the disadvantage that it does not prove the invariance of the invariants under the action of the Gauge group (or more precisely, a subgroup $\mathcal{G}$ of $Gau(E)$ which acts freely on $\mathcal{A}$). Thus we have to restrict our attention to obtaining maps $H_r(\mathcal{F}) \to H^{2p-r-1}(B,\mrz)$ rather than $H_r(\mathcal{F}/\mathcal{G}) \to H^{2p-r-1}(B,\mrz)$. In this sense our results are a special case of the results of \cite{lp} for the choice $\mathcal{G}=\{e\}$. (Notice that nothing in our approach actually uses the fact that the connection on the bundle corresponding to a point on the geometric realization of the cycle is a linear combination of the connection corresponding to the vertices. All that is required is that the connection corresponding to any point in the parameter space is flat.) However our approach does not assume that the Cheeger-Simons construction is applicable in the case when the base is an infinite dimensional Fr\'echet manifold. The Cheeger-Simons construction of a differential character given a bundle with a connection uses the fact (due to Narasimhan-Ramanan \cite{narsimhanramanan}) that any smooth connection on the bundle $E \to B$ (with the base $B$ a finite dimensional manifold) can be obtained from a universal connection on a Stiefel bundle by pullback under a smooth classifying map. However to the best of knowledge of the present author, an analogous result when the base is an infinite dimensional Fr\'echet manifold has not yet been proved in the literature (also see the discussion in section 2.1.1, and footnote 3, p.9 in \cite{becker}).\\
Notice also that our result that the character $\tilde{\psi}(\Sigma)$ does not depend on the choice of $u$, proves that the maps $\chi_{P,u} : H_r(\mathcal{F},\mz) \times H_{2p-r-1}(B,\mz) \to \mrz$ constructed in \cite{lp} do not depend upon the choice of $u$. We do not expect this to be true more generally in the case when $\mathcal{G} \neq \{e\}$.

We have already seen that the maps constructed in \cite{jni} are compatible with the maps in \cite{bl}. In proposition 10 of \cite{lp}, it is shown that the maps constructed there are compatible with the ones in \cite{bl}. It is also possible to understand the relation of our construction with the one in \cite{lp}. For this purpose we need the following lemma which follows directly from the definition of fiber integration. 
\begin{lemma}
 Let $h \in \hhat^k(E)$, and let $f : \Sigma \to E$ represent a $(k-1)$-cycle $z$. Then 
 \begin{equation}
  h(z) = \dcfib{\Sigma}(f^*h)
 \end{equation}
where the fiber integration is along the fiber $\Sigma$ of the fiber bundle $\Sigma \to *$.
\end{lemma}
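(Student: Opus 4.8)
The plan is to unwind both sides of the claimed identity using the definition of fiber integration of differential characters and the special nature of the bundle $\Sigma \to *$. Recall that for the bundle $\Sigma \to *$, the ``base'' is a point, the fiber is all of $\Sigma$ (a compact oriented stratifold without boundary of dimension $k-1$), and a $0$-chain in the base is just an integer multiple of the point $*$. So the transfer map $\lambda : C_0(*,\mz) \to C_{k-1}(\Sigma,\mz)$ sends the generator $*$ to a chain representing the fundamental cycle of $\Sigma$ (pulled back along $\Sigma \to *$, which is trivial), and $a(*)$ is a chain of dimension $1$, which is automatically thin in $C_*(*,\mz)$ since any $1$-form on a point vanishes. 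I would first make these observations explicit so the formula $(\dcfib{\Sigma}(g))(*) = g(\lambda(*)) \cdot \exp(2\pi i \int_{a(*)} \fintf \mathrm{curv}(g))$ collapses to just $g(\lambda(*)) = g(\text{fundamental cycle of }\Sigma)$.

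Next I would apply this with $g = f^*h \in \hhat^{k-1}(\Sigma)$ — here $\dim \Sigma = k-1$ so the fiber integration $\dcfib{\Sigma} : \hhat^{k-1}(\Sigma) \to \hhat^{0}(*)$ lands in $\hhat^0(*) \cong \mrz$, which is consistent with $h(z) \in \mrz$. By the previous step, $\dcfib{\Sigma}(f^*h)$ equals $(f^*h)$ evaluated on a fundamental cycle $c$ of $\Sigma$. Then I invoke the naturality of differential characters under pullback: $(f^*h)(c) = h(f_*c)$. Finally, since $f : \Sigma \to E$ represents the cycle $z$, the pushforward $f_*c$ of the fundamental cycle is (a smooth singular representative of) $z$, so $h(f_*c) = h(z)$, modulo the fact that differential characters are evaluated on cycles up to thin chains — but $f_*c$ and $z$ differ by a thin chain precisely because $f$ represents $z$ in the sense of the geometric-chain-to-singular-chain comparison map $\psi$ described in Section \ref{geometricchains}.

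The one technical point that deserves care — and which I expect to be the only real obstacle — is the bookkeeping around thin chains and the precise sense in which ``$f : \Sigma \to E$ represents the cycle $z$.'' Since $h \in \hhat^k(E)$ is only defined on $Z_{k-1}(E,\mz)$ and factors through $Z_{k-1}(E,\mz)/\partial S_k(E,\mz)$, I need $f_*c$ to be a genuine smooth singular cycle whose class in $Z_{k-1}/\partial S_k$ agrees with that of $z$; this is exactly the content of the map $\psi : \mathcal{C}_{k-1}(E) \to C_{k-1}(E,\mz)/S_{k-1}(E,\mz)$ and the fact, recalled in the excerpt, that it refines to cycles and boundaries and induces the isomorphism $\mathcal{H}_{k-1}(E) \xrightarrow{\sim} H_{k-1}(E,\mz)$. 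So I would phrase the hypothesis ``$f$ represents $z$'' as: $[\psi(f)] = [z]$ in $Z_{k-1}(E,\mz)/\partial S_k(E,\mz)$, and then the chain of equalities above goes through verbatim. I would also remark that the independence of $\dcfib{\Sigma}$ from the auxiliary choices $\lambda, \zeta, a$ (asserted after the definition of fiber integration) is what makes the collapsed formula well-defined, so no separate verification is needed there.
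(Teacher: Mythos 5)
Your approach is the same as the paper's: the paper offers no written proof, asserting only that the lemma ``follows directly from the definition of fiber integration,'' and your unwinding of the defining formula (the correction term $\exp(2\pi i\int_{a(*)}\fint_\Sigma \mathrm{curv})$ dies because everything in sight lives on a point, $\lambda(*)$ is a fundamental cycle of $\Sigma$, and pullback naturality $(f^*h)(c)=h(f_*c)$ finishes) is exactly that direct verification, including the careful treatment of thin chains and of what ``$f$ represents $z$'' means. One bookkeeping slip: pullback preserves degree, so $f^*h\in\hhat^{k}(\Sigma)$, not $\hhat^{k-1}(\Sigma)$, and since $\dim\Sigma=k-1$ the fiber integration lands in $\hhat^{1}(*)\cong\mrz$ (functions on $Z_0(*)\cong\mz$), not in $\hhat^{0}(*)$, which in this paper's indexing would be evaluated on $Z_{-1}(*)$ and is degenerate; with this correction the vanishing of the exponential factor is because $\fint_\Sigma \mathrm{curv}(f^*h)$ is a $1$-form on a point, and the rest of your chain of equalities goes through verbatim.
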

Let $\chi_{\mathbb{A}} \in \hhat^{2p}(B \times \mathcal{A})$ be the character corresponding to the connection $\mathbb{A}$ on $E \times \mathcal{A} \to B \times \mathcal{A}$. Let $f : \Sigma \to \mathcal{F} \to \mathcal{A}$ be a cycle representing a homology class in $H_r(\mathcal{F})$. Then the connection that we used in our construction is the pull back of the connection $\mathbb{A}$ in the diagram \begin{center}
                                                                                                                                                                                                                                                            
\begin{tikzcd}
E \times \Sigma \arrow[r] \arrow[d] & E \times \mathcal{A} \arrow[d] \\
B \times \Sigma \arrow[r] & B \times \mathcal{A}.
\end{tikzcd} 
\end{center}
Hence $h_{B \times \Sigma} \in \hhat^{2p}(B \times \Sigma)$ is given by $h_{B \times \Sigma}= (id_B \times f)^* \chi_{\mathbb{A}}$. Let $g : K \to B$ is a $(2p-r-1)$-cycle in $B$. Then consider $g \times f : K \times \Sigma \to B \times \mathcal{F}$
We then have  
\begin{align*}
 \psi(\Sigma)(K) &= \dcfib{\Sigma}(h_{B \times \Sigma})(K)\\
                &= \dcfib{K} (g^*(\dcfib{\Sigma}(h_{B \times \Sigma})))\\
                &=\dcfib{\Sigma \times K} ((g \times f)^*\chi_{\mathbb{A}})\\
                &= \chi_{\mathbb{A}}(\Sigma \times K).
\end{align*}
\section*{Acknowledgments}
I am thankful to my supervisor Dr. Rishikesh Vaidya for discussions and support. I am financially supported by the Council of Scientific \& Industrial Research - Human Resource Development Group (CSIR-HRDG) under the CSIR-SRF(NET) scheme. I am grateful to CSIR-HRDG for the same.
\bibliography{refs}
Ishan Mata \textsc{Department of Physics, Birla Institute of Technology and Science - Pilani, Pilani campus, Pilani, Rajasthan, India. PIN : 333031}\\
\textit{Email} : ishanmata@gmail.com
\end{document}